\theoremstyle{plain}
\newtheorem{theorem}{Theorem}
\newtheorem{lemma}{Lemma}
\newtheorem{proposition}{Proposition}
\theoremstyle{definition}
\newtheorem{definition}{Definition}
\theoremstyle{remark}
\newtheorem{remark}{Remark}
\newcommand{\Z}{\mathbb{Z}}
\newcommand{\C}{\mathbb{C}}
\numberwithin{equation}{section} 
\newcommand{\rmap}{\longrightarrow}
\newcommand{\M}{\ensuremath{\mathcal{M}}}
\newcommand{\N}{\ensuremath{\mathbb {N}}}
\newcommand{\U}{\ensuremath{\mathcal{U}}}
\newcommand{\F}{\ensuremath{\mathcal{F}}}
\newcommand{\g}{\ensuremath{\Gamma}}
\newcommand{\ps}{{\raise 1pt\hbox{\tiny (}}}
\newcommand{\pss}{{\raise 1pt\hbox{\tiny [}}}
\newcommand{\pdd}{{\raise 1pt\hbox{\tiny ]}}}
\newcommand{\pd}{{\raise 1pt\hbox{\tiny )}}}
\newcommand{\bs}{{\raise 1pt\hbox{\tiny [}}}
\newcommand{\bd}{{\raise 1pt\hbox{\tiny ]}}}
\def\cross{\mathinner{\mathrel{\raise0.8pt\hbox{$\scriptstyle>$}}
                 \joinrel\mathrel\triangleleft}}
\def\U{\mathcal{U}}
\def\V{\mathcal{V}}
\def\K{\mathcal{K}}
\newcommand{\be}{\begin{equation}}
\newcommand{\ee}{\end{equation}}
\newcommand{\nn}{\nonumber \\}
\newcommand{\wt}{\mbox{\rm wt}\ }
\newcommand{\nc}{\newcommand}
\nc{\cali}{\mathcal}
\nc{\on}{\operatorname}
\nc{\Wick}{{\mb :}}
\nc{\ddz}{\frac{\partial}{\partial z}}
\nc{\ch}{\mbox{ch}}
\nc{\Oo}{{\cali O}}
\nc{\cond}{|\,}
\nc{\bib}{\bibitem}
\nc{\pone}{\Pro^1}
\nc{\pa}{\partial}
\nc{\arr}{\rightarrow}
\nc{\larr}{\longrightarrow}
\nc{\ket}{\rangle}
\nc{\bra}{\langle}
\nc{\gam}{\bar{\gamma}}
\nc{\q}{\widetilde{Q}}
\nc{\ep}{\lambda}
\nc{\su}{\widehat{{\mf s}{\mf l}}_2}
\nc{\sw}{{\mf s}{\mf l}}
\nc{\h}{{\mf h}}
\nc{\n}{{\mf n}}
\nc{\ab}{\mf{a}}
\nc{\is}{{\mb i}}
\nc{\js}{{\mb j}}
\nc{\bi}{\bibitem}
\nc{\He}{{\cali H}}
\nc{\inv}{^{-1}}
\nc{\ol}{\overline}
\nc{\wh}{\widehat}
\nc{\dst}{\displaystyle}
\nc{\delt}{\partial_t}
\nc{\ddt}{\frac{\partial}{\partial t}}
\nc{\delx}{\partial_x}
\nc{\mb}{\mathbf}
\nc{\mf}{\mathfrak}
\nc{\mbb}{\mathbb}
\nc{\Ctt}{\C((t))}
\nc{\Ct}{\C[t,t\inv]}
\nc{\ghat}{\wh{\g}}
\nc{\un}{\underline}
\nc{\mc}{\mathcal}
\nc{\BB}{{\mc B}}
\nc{\bb}{{\mf b}}
\nc{\kk}{{\mf k}}
\nc{\frob}{\times}
\nc{\sm}{\setminus}
\nc{\Pp}{{\mathbb P}^1}
\nc{\Aa}{{\mc A}}
\nc{\AutO}{\on{Aut}\Oo}
\nc{\AUTO}{\un{\on{Aut}}\Oo}
\nc{\AUTK}{\un{\on{Aut}}\K}
\nc{\Heout}{\He_{\out}}
\nc{\Hetil}{{\widetilde\He}}
\nc{\wb}{\overline}
\nc{\Res}{\on{Res}}
\nc{\pitil}{\Pi}
\nc{\Ctil}{\wt{C}}
\nc{\auto}{\on{Aut} \Oo}
\nc{\phitil}{\wt{\phi}}
\nc{\gz}{\g_{\vec z}}
\nc{\tensorM}{\bigotimes_{i=1}^N{\mathbb M}_i}
\nc{\tensorW}{\bigotimes_{i=1}^N W_{\nu_i,k}}
\nc{\out}{\on{out}}
\nc{\m}{{\mathfrak m}}
\nc{\gx}{\g^0_{\vec x}}
\nc{\hx}{\He^0_{\vec x}}
\nc{\tensorpi}{\pi_{\nu_1,\ldots,\nu_N}^\kappa}
\nc{\Phizw}{\Phi_{\vec w}({\vec z})}
\nc{\Pro}{{\mathbb P}}
\nc{\De}{\Delta}
\nc{\us}{\underset}
\nc{\Ll}{\mc L}
\nc{\dR}{\on{dR}}
\nc{\T}{{\mc T}}
\nc{\Xn}{\overset{\circ}X{}^n} \nc{\Dn}{\overset{\circ}D{}^n}
\nc{\Dxn}{\overset{\circ}D{}^n_x} \nc{\varphitil}{\wt{\varphi}}
\nc{\lf}{{\mf l}}
\nc{\GL}{{}^L G}
\nc{\Vir}{\on{Vir}}
\begin{document}

\title[Cosimplicial cohomology of restricted meromorphic functions]  
{Cosimplicial cohomology of restricted meromorphic functions on foliated manifolds} 
\author{A. Zuevsky} 
\address{Institute of Mathematics \\ Czech Academy of Sciences\\ Prague, Czech Republic}

\email{zuevsky@yahoo.com}

\begin{abstract}
Starting from the axiomatic description of meromorphic functions with prescribed 
analytic properties,  
we introduce the cosimplicial cohomology of restricted meromorphic functions defined on 
foliations of smooth complex manifolds.  
Spaces for double chain-cochain complexes and coboundary operators are constructed.   
Multiplications of several restricted meromorphic functions 
with non-commutative 
parameters, as well as 
for elements of double complex spaces are introduced and their properties are discussed.   
In particular, we prove that the construction of invariants 
of cosimplicial cohomology of restricted meromorphic 
functions is non-vanishing, independent of the choice of the transversal basis 
for a foliation, and invariant with respect to changes of coordinates on a smooth manifold
and on transversal sections. 
As an application, we provide an example of general cohomological invariants, 
in particular, generalizing 
 the Godbillon--Vay invariant for codimension one foliations.  

\bigskip 
AMS Classification: 53C12, 57R20, 17B69 
\end{abstract}

\keywords{Meromorphic functions with prescribed analytic properties, foliations, cohomology} 
\vskip12pt  

\maketitle

\section{Conflict of Interest}
The author states that: 

1.) The paper does not contain any potential conflicts of interests.

\section{Data availability statement}
The author confirms  that: 

\medskip 
1.) All data generated or analyzed during this study are included in this published article. 

\medskip 
2.)   Data sharing not applicable to this article as no datasets were generated or analyzed during the current study.

\section{Introduction}
\setcounter{equation}{0}
 It is natural to consider the cohomology of various structures 
associated to foliations of a smooth manifold 
\cite{Bott, BH, BR, CM, F73, Ghys, Khor, Lawson, LosikArxiv}.    
 In this paper we construct explicitly a cohomology theory 
of meromorphic functions with specified analytical properties.  
Restricted meromorphic functions depend on non-commutative parameters provided 
by elements of an infinite-dimensional Lie algebra as well as sets of commutative formal 
variables which can be associated to local coordinates of certain complex domains. 
Restricted meromorphic functions are defined subject to several axioms and 
restrictions on their convergence. 
A particular example of such functions can be given by bilinear pairing on 
the algebraic completion 
of spaces associated to an infinite-dimensional Lie algebras \cite{BR, CE, FF1988, Fei, Fuks, GF, Wag}.   
For a smooth complex manifold $M$, one needs to identify formal variables 
of restricted meromorphic functions with local coordinates of domains on $M$. 
In the case of a foliated manifold, 
 additional formal parameters can be identified with local coordinates on 
sections of a transversal basis. 
 Our motivation was to 
 understand further the continuous cohomology 
\cite{A, BS, Bott, BH, BR, CM, FF1988, Fei, Fuks, F73, gal, GF, Ghys, Haeflinger, Hae, HK, HinSch, Kaw, Khor, 
 Kod, LosikArxiv, PT, Wag}
 of structures defined on foliations.   
One hopes to use properties of restricted meromorphic functions to be able to describe new 
invariants of foliations.   
In particular \cite{BS}, one hopes to relate cohomology of
functions with specified behavior originating from 
  infinite-dimensional Lie algebras-valued series on manifolds.   
In order to construct a cohomology theory on foliations, 
we define double complex spaces of restricted meromorphic functions 
on cosimplicial domains introduced in the standard way \cite{Wag}. 
Properties of such spaces are then studied. 
We prove independence of their elements with respect to changes of the transversal basis and coordinates 
on $M$ and transversal sections. 
An appropriate coboundary operator is defined. 
To be able to study examples of cohomology invariants, we introduce a multiplication 
of several elements of complex spaces and determine its properties. 
The main result of this paper is the computation of the 
 general form of cohomology invariants.  
An example generalizing the Godbillon--Vay invariant for codimension one foliations 
\cite{BG, BGG, BGZ, gal, GZ, Ghys, Lawson, N0,  N1, N2}  
is derived and examined. 
 For further developments, we plan to study applications of restricted meromorphic functions and  
the cosimplicial construction introduced in this paper for $K$-theory, description of cohomological 
invariants of foliations \cite{A, Haeflinger, Hae, PT, Sm, Qui}, deformation theory \cite{GerSch}, Losik's approach 
\cite{gal, LosikArxiv}, Krichever--Novikov algebras \cite{Schl0, Schl1, Schl2}, current algebras on manifolds \cite{S}, 
factorization algebras \cite{HK}, and classification of types of leaves for foliations \cite{BG, BGG, N0, N1, N2}.  
\section{Axiomatics of restricted meromorphic functions}  
\label{axio}
Developing ideas of \cite{Huang}, we introduce in this Section 
 the notion of restricted meromorphic functions,
 i.e., meromorphic functions with prescribed analytic behavior 
 on open complex domains. 
In Sections \ref{pisa}--\ref{coboundaryoperator} we will see that the space of such functions  
underlies a cohomology theory.     
 Restricted meromorphic functions depend on an number of non-commutative parameters (provided by an 
infinite-dimensional Lie algebra elements) as well as commutative formal variables. 
First, let us set the notations we use. 
For a tuple of $n$ complex formal variables ${\bf z}= (z_1, \ldots, z_n)$, 
 we dedicate the notation
${\bf z}_l$
for $l$ sets of $n$ values of ${\bf z}$, namely, 
  ${\bf z}_l 
= \left((z_{1, 1}, \ldots,  z_{1, n}), \ldots, (z_{l, 1}, \ldots, z_{l, n}) \right)$. 
For local coordinates on transversal sections of codimension $p$ foliation $\F$ (see Subsection \ref{pupa}),  
the notation ${\bf z}'$ for 
$k$-sets of $p$ formal parameters 
 is reserved. 
In general, for a set of $m$ elements $(y_1, \ldots, y_m)$ we use the notation 
${\bf y}_m$, while for $k_1 \le k_2 \in \Z$, 
 we denote    
${\bf y}_{(k_1, k_2) } =\left( y_{k_1}, y_{k_1+1}, \ldots, y_{k_2} \right)$.  
When $m$ is arbitrary we write ${\bf y}$.  
In particular, for  
$l$ sets of $n$ algebra/group elements we denote ${\bf g}_l= (g_1, \ldots, g_l)$.  
When combined with $l$ sets of $n$ formal parameters ${\bf z}_l$, 
 we use the specific ${\bf x}$-notation:   
${\bf x}_l=(g_1, z_1), \ldots,(g_l, z_l))$.  
For a function $A(x)$ 
 we denote the sequence $(A(x_1),... ,A(x_n))$ by $\overline{A}({\bf x})$,  
while $\overline{A}({\bf x}_l)$ stands for 
$(A(x_{1, 1}), \ldots, A(x_{1, n})  ), \ldots , (A(x_{l, 1}), \ldots, A(x_{l, n}) )$. 
The notations for products of functions are $A({\bf x}) =(A(x_1)$ $\ldots$ $A(x_n))$ and 
$A({\bf x}_l) = A(x_{1, 1}) \ldots A(x_{1, n})$  $\ldots$ $A(x_{l, 1})  \ldots A(x_{l, n})$. 
\subsection{Meromorphic functions with non-commutative parameters}
\label{functional}
In this paper we consider functions $f({\bf z}_l)$  
of several complex variables with non-commutative parameters 
 defined on sets of open domains
extendable meromorphic functions $\M(f({\bf z}_{l}))$ converging   
 on larger domains with respect to corresponding norms. 
Denote by $F_{ln} \mathbb C$ the  
configuration space of $l \ge 0$ ordered sets of $n$ complex coordinates in $\mathbb C^{ln}$, 
$F_{ln}\mathbb C=\{{\bf z}_l \in \mathbb C^{ln}\;|\; z_{i, l} \ne z_{j, l'}, i\ne j\}$.
Let $\mathfrak g$ be an infinite-dimensional Lie algebra 
 generated by $\left\{ \xi_i, i \in \Z \right\}$, and   
$\mathcal G({\bf x}_l)$ 
be the space of complex-valued functions depending on $l$-pairs of elements ${\mathfrak g}$ and ${\bf z}_l$. 
Abusing notations, for $F \in \mathcal G({\bf x}_l)$ 
we call 
 a linear map 
with the only possible poles at  
$z_{i, l}=z_{j, l'}$, $i\ne j$, $1 \le l$, $l' \le n$,  
$F: {\bf x}_l
\mapsto   
 \M( F({\bf x}_l ))$,  
the meromorphic function in ${\bf z}_l$.    

Next, let us state further properties of meromorphic functions we require. 
We define the left action of the permutation group $S_{ln}$ on $F({\bf x}_l)$ 
by
$\nc{\bfzq}{{\bf z}_l}
\sigma(F)({\bf x}_l)=F({\bf g}_l, {\bf z}_{\sigma(i)})$, $1 \le i \le l$. 
Denote by $(T_G)_i$ the translation operator $f(z_i) \mapsto f(z_i + z_0)$ 
acting on the $i$-th entry.  
 We then define the action of partial derivatives on an element $F({\bf x}_l)$  
\begin{equation}
\label{cond1}
\partial_{z_i} F({\bf x}_l)   = F((T_G)_i{\bf g}_l, {\bf z}_l),   
\qquad 
\sum\limits_{i \ge 1} \partial_{z_i}  F({\bf x}_l)  
= T_{G} F({\bf x}_l),   
\end{equation}
and call it $T_G$-derivative property. 
For   $z \in \C$,  let 
\begin{equation}
\label{ldir1}
 e^{zT_G} F ({\bf x}_l)   
 = F({\bf g}_l, {\bf z}_l +z). 
\end{equation}
 Let 
 ${\rm Ins}_i(A)$ denotes the operator of multiplication by $A \in \C$ at the $i$-th position. Then we define   
\begin{equation}
\label{expansion-fn}
F({\bf g}_l, {\rm Ins}_i(z) \; {\bf z}_l)=  
F( {\rm Ins}_i (e^{zT_G}) \; {\bf g}_l, {\bf z}_l), 
\end{equation}
 equal as equal power series expansions in $z$, in particular, 
 absolutely convergent
on the open disk $|z|< \min_{i\ne j}\{|z_{i, l}-z_{j, l'}|\}$. 
A meromorphic function has $K_G$-property   
if for $z\in \C^{\times}$ satisfies 
$z{\bf  z}_l \in F_{ln}\C$,  
\begin{equation}
\label{loconj}
z^{K_G } F ({\bf x}_l) = 
 F \left(z^{K_G} {\bf g}_l, 
 z\; {\bf z}_l\right). 
\end{equation}
Let us recall the notion of a shuffle. 
For $m \in \N$ and $1\le p \le m-1$, 
 let $J_{m; p}$ be the set of elements of 
$S_{m}$ which preserves the order of the first $p$ numbers and the order of the last 
$(m-p)$ numbers, that is,
$J_{m, p}=\{\sigma\in S_{m}\;|\;\sigma(1)< \ldots <\sigma(p),\;
\sigma(p+1)<\ldots <\sigma(m)\}$. 
Let $J_{m; p}^{-1}=\{\sigma\;|\; \sigma\in J_{m; p}\}$.  
For some meromorphic functions we require the property: 
\begin{equation}
\label{shushu}
\sum_{\sigma\in J_{ln; p}^{-1}}(-1)^{|\sigma|} 
\sigma( 
F ({\bf g}_{\sigma(i)}, {\bf z}_l) )=0.  
\end{equation}
 Let ${\mathcal W}(\mathfrak g)$ be the space of 
universal enveloping algebra  
$U(\mathfrak g)$-valued formal series in 
 ${\bf z}_l$.   
We will consider meromorphic functions $F({\bf x}_l)$ depending 
on elements ${\bf g}_l \in {\mathcal W}(\mathfrak g)$ and  
 satisfying the above conditions of this subsection.   
In what follows we concentrate on the case $\mathcal G(\mathfrak g)=G$ where $G$ is 
the algebraic completion of $\mathcal W(\mathfrak g)$. 
In order to define a specific space of restricted meromorphic functions associated to $\mathcal W(\mathfrak g)$   
and satisfying certain properties   
we have to work with the algebraic completion of $\mathcal W(\mathfrak g)$.  
 In particular, for that purpose, we have to consider 
elements of a $\mathcal W(\mathfrak g)$ with inserted 
exponentials of the grading operator $K_G$, i.e., of the form 
$\sum\limits_{m \in \Z} z^{-m-1}  \; a^{K_G} g_m \; b^{K_G} g$. 
For general $a$, $b$, $z \in \C^\times$, 
 such elements  
do not satisfy the properties 
 needed  to construct a appropriate cohomology theory of restricted meromorphic 
functions.  
Thus we have to extend $\mathcal W(\mathfrak g)$ algebraically and analytically, i.e., 
to consider 
$G=\overline{\mathcal W}(\mathfrak g) = \prod_{m\in \Z} \mathcal W_{(m)}=(\mathcal W')^{*}(\mathfrak g)$,
as well as include extra elements  
to make the structure of $\mathcal W(\mathfrak g)$ compatible 
with the descending filtration with respect to the grading subspaces, 
and analytic properties with respect to formal parameters ${\bf z}_l$. 
Then $G$  
 has the structure that is complete in 
 topology determined by the filtration.  
When a particular set ${\bf z}_m$ of formal parameters is specified for $G$ we denote it as $G_{{\bf z}_m}$.  
 We assume that $G$ is endowed with non-degenerate bilinear pairing $( . \; ; .)$
(not to be confused with the notation $(g, z)$ for parameters),     
 and 
 denote by $\widetilde{G}$ the dual space to $G$ with respect to this pairing.   
 The norm determining convergence of a non-commutative parameter meromorphic functions 
 can be in particular taken as a bilinear pairing mentioned above.  
For ${\bf g}_l \in G$,  denote ${\bf x}_l=({\bf g}_l, {\bf z}_l)$.   
 In this paper we will deal with meromorphic functions given by 
$F: {\bf x}_l 
\mapsto   
   \M\left( \left(\vartheta,  F({\bf x}_l) \right)\right)$,  
and converging in ${\bf z}_l$ on certain complex domains ${\bf V}_l$.  
\subsection{Restricted meromorphic functions} 
\label{puma}
In this subsection, following \cite{Huang}, 
we give the definition of meromorphic functions with prescribed analytical behavior 
on a complex domain.   
Let us assume that the space $G$ is endowed with a grading $G=\bigcup_{m \in \Z, m > m_0} G_{(m)}$
 bounded from below 
with respect to 
 the grading operator $K_G$.  
We denote by $P_m: G \to G_{(m)}$, 
the projection of $G$ on $G_{(m)}$.
For each element $g \in G$, and $x=(g, z)$, $z\in \C$ let us associate the differential form  
$\nu_G(x)=   
\sum\limits_{m \in \C }  g_m \; z^{-m} \; dz^{\wt(g)}$,    
where 
$\wt(g)$ is the weight with respect to the grading operator $K_G g= \wt(g) g$.   
  Finally, we formulate the definition of restricted meromorphic functions 
with extra sets of parameters.   
Due to the nature of axiomatics of meromorphic functions described in Subsection \ref{functional}, 
we have to restrict their analytic behavior to be able to introduce corresponding cohomology theory. 
In particular, we would like to make $F({\bf x}_l)$ to play the role of cochains. 
Corresponding coboundary operators are supposed to include insertions  
into $F({\bf x}_l)$ of extra ${\bf x}$-dependence  
by means of extra $\overline{\nu}_G({\bf x})$-forms. 
The result of such insertions should remain in $G$. 
As a formal sum, such map has to be absolutely convergent in ${\bf z}_l$.  
For this purpose we formulate the following definition representing  
extra restricting conditions on meromorphic functions introduced in Subsection \ref{functional}. 
\begin{definition}
\label{defcomp}
We assume that there exist positive integers $\beta(g_{l', i}, g_{l", j})$ 
depending only on $g_{l', i}$, $g_{l'', j} \in G$ for 
$i$, $j=1, \dots, (l+k)n $, $k \ge 0$, $i\ne j$, $ 1 \le l', l'' \le n$.  
 Let   
${\bf l}_n$ be a partition of $(l+ k)n     
=\sum_{i \ge 1} l_i$, and $k_i=l_{1}+\cdots +l_{i-1}$. 
For $\zeta_i \in \C$,  
define for $\widetilde{k}_i= k_i+l_i$, 
$f_i  
=F \left(  \overline{\nu}_G ( {\bf g}_{ \widetilde{k}_i }, 
 {\bf z}_{\widetilde{k}_i} - \zeta_i ) \right)$, 
for $i=1, \dots, ln$.
We then call a meromorphic function $F({\bf x}_l)$ satisfying properties \eqref{cond1}--\eqref{loconj}, 
a restricted meromorphic function if  
under the following conditions on domains, 
$|z_{k_i+p} -\zeta_{i}| 
+ |z_{k_j+q}-\zeta_{j}|< |\zeta_{i} -\zeta_{j}|$,  
for $i$, $j=1, \dots, k$, $i\ne j$, and for $p=1, 
\dots$,  $l_i$, $q=1$, $\dots$, $l_j$, 
the function   
 $\sum\limits_{ {\bf m}_n \in \Z^n}  
F \left(  \overline{P_{m_i}  f_i}, {\bm \zeta}_l  \right)$,      
is absolutely convergent to an analytic extension 
in ${\bf z}_{l+k}$, independently of complex parameters ${\bm \zeta}_l$,
with the only possible poles on the diagonal of ${\bf z}_{l+k}$     
of order less than or equal to $\beta(g_{l',i}, g_{l'', j})$.   
 In addition to that, for ${\bf g}_{l+k}\in G$,  the series 
$\sum_{q\in \C} 
 F \left( \overline{\nu}_G (x_k) \; {\bf P}_q \left(
  F \left( {\bf x}_{(k, k+l)}  \right) \right)   \right)$,   
is absolutely convergent when $z_i\ne z_j$, $i\ne j$
$|z_i|>|z_s|>0$, for $i=1, \dots, k$ and 
$s=k+1, \dots, l+k$, and the sum can be analytically extended to a
meromorphic function 
in ${\bf z}_{l+k}$ with the only possible poles at 
$z_i=z_j$ of orders less than or equal to 
$\beta(g_{l', i}, g_{l'', j})$. 

For an arbitrary $\theta \in \widetilde{G}$ and 
 $l \ge 0$ complex variables ${\bf z}_l$ defined in domains ${\bf V}_l$,   
 let us introduce the following vector   
$\overline{F}({\bf x}_l)= \left[ 
F \left( {\bf g}_l, {\bf z}_l \; {\bf dz}_{{\it i}(ln)} \right) \right]$,      
containing meromorphic functions 
where ${\it i}(j)$, $j=1, \ldots, ln$, are cycling permutations of $(1, \dots, ln)$ starting with $j$. 
 For $k$ elements ${\bf g}'_k$, $k \ge 0$,  
 we call the space 
of all vectors $\overline{F}({\bf x}_l)$ 
 combined with $k$-sets of forms $\overline{\nu}_G({\bf x}'_k)$ depending on 
 $p$ complex variables    
  ${\bf z}'_k$ defined in domains ${\bf U}_k$        
satisfying 
 $T_G$- and $K_G$-properties \eqref{cond1}, \eqref{loconj},  
  and \eqref{shushu},  the space $\Theta(G, {\bf V}_l, {\bf U}_k)$ of restricted meromorphic functions.    
\end{definition}
\subsection{Coordinate change invariance}
In this subsection we show that vectors $\overline{F}({\bf x}_l) \in \Theta(G, {\bf V}_l, {\bf U}_k)$ containing 
restricted meromorphic functions 
are canonical with respect to changes of sets of formal parameters. 
Let
${\rm Aut} \; \Oo^{(ln)} ={\rm Aut}_{\mathbb{C}}[[ {\bf z}_l ]]$  
be the group of formal automorphisms 
of $ln$-dimensional formal power series algebra ${\mathbb{C}}[[ {\bf z}_l ]]$. 
In what follows, we assume that $G$   
is equipped with the action of operators $\left(z^{m+1} \partial_z\right)$, $m \in \N$, 
and 
its Lie subalgebra ${\rm Der}_0 \; \Oo^{(ln)}$ of $\mathfrak g$ is   
given by the Lie algebra of ${\rm Aut} \; \Oo^{(ln)}$.  
Since the vector fields $\left(z^{m+1} \partial_{z}\right)$  
act on $G$ as operators of degree $(-m)$,  
the action of the Lie subalgebra
${\rm Der}_+ \;\Oo^{(ln)}$ is locally nilpotent. 
 The operator $\left(z\partial_z\right)$ acts as the grading operator $K_G$, which  
is diagonalizable with integral eigenvalues. 
Thus, the action of ${\rm Der} \;\Oo^{(ln)}$ on $G$ 
 can be exponentiated 
to an action of ${\rm Aut} \; \Oo^{(ln)}$. 
We write an element of ${\rm Aut} \; \Oo^{(ln)}$ as 
${\bf z} \to {\bm \rho}_n ({\bf z})$, 
 where  elements of ${\bm \rho}_n$ are 
$\rho_i( {\bf z}) = \sum
_{    {i_1\geq 0, \ldots, i_n\geq 0,}   \quad  
{ \sum\limits_{j=1}^n i_j \geq 1}      } 
a_{{\bf i}_k }  \; 
{\bf z}_k^{ {\bf i}_k }$,    
where $a_{{\bf i}_k}\in{\mathbb C}$, 
and are the images of $\rho_i$, $i=1, \ldots, l$  in the finite dimensional ${\mathbb C}$-vector 
space.
In order to represent the action of the group ${\rm Aut} \; \Oo^{(ln)}$ on the variables ${\bf z}$ of 
 $\overline{F}$ in terms of an action on elements ${\bf g}_l$, 
we have to transfer (as in $n=1$ case of \cite{BZF})
 to an exponential form of the transformations $\rho_i({\bf z})$ with corresponding 
  coefficients $\beta^{(j)}_{{\bf r}_l} \in \C$ recursively
found \cite{GR} in terms of  coefficients $a^{(i)}_{ {\bf r}_l }$.  

Next, we recall the general definition of a torsor \cite{BZF}.  
Let $\mathfrak G$ be a group, and $S$ a non-empty set. 
Then $S$ is called a $\mathfrak G$-torsor if it is equipped with a simply transitive right action of $\mathfrak G$,
i.e., given $s_1$, $s_2 \in S$, there exists a unique $\mu \in \mathfrak G$ such that 
$s_1 \cdot \mu = s_2$, 
where the right action is given by 
$s_1 \cdot (\mu \mu') = (s_1 \cdot  \mu) \cdot \mu'$.  
The choice of any $s_1 \in S$ allows us to identify $S$ with $\mathfrak G$ by sending
 $s_1  \cdot \mu$ to $\mu$. Finally, we state 
\begin{lemma}
\label{kuzya}
The vector $\overline{F}({\bf x}_l) \in \Theta(G, {\bf V}_l, {\bf U}_k)$
 is invariant with respect to changes of formal variables. 
\end{lemma}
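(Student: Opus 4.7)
The plan is to decompose an arbitrary element of ${\rm Aut}\;\Oo^{(ln)}$ into a constant shift, a linear rescaling, and an exponential of a higher-order formal vector field, and to absorb each of these pieces into the algebra entries ${\bf g}_l$ using the axioms on restricted meromorphic functions. This transfers the change of formal variables to an equivalent action on $G$; combined with the torsor property stated just before the lemma, it will yield canonicality of $\overline{F}({\bf x}_l)$.

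First, I would write ${\bm \rho}_n = {\bm \rho}_n^{>} \circ L \circ T$, where $T$ is the translation ${\bf z}\mapsto {\bf z}+{\bf z}_0$, $L$ is a linear transformation, and ${\bm \rho}_n^{>}$ has vanishing constant and linear parts. The translation is absorbed by \eqref{ldir1}: it turns each $g_i$ into $e^{z_0 T_G}g_i$. The diagonal piece of $L$ is absorbed by the $K_G$-property \eqref{loconj}, and the off-diagonal and higher-order pieces ${\bm \rho}_n^{>}$ are written as an exponential $\exp\bigl(\sum v_{m}\bigr)$ of vector fields $v_m \in {\rm Der}_+\;\Oo^{(ln)}$ of the form $z^{m+1}\partial_{z}$, $m\ge 1$. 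Since these operators act on $G$ with strictly negative degree $(-m)$, the paragraph preceding the lemma shows their action is locally nilpotent; the completeness of $G=\overline{\mathcal W}(\mathfrak g)$ in the filtration topology then guarantees that the exponential converges to a well-defined automorphism of $G$, and the recursive passage from the coefficients $a^{(i)}_{{\bf r}_l}$ to $\beta^{(j)}_{{\bf r}_l}$ (as referenced in \cite{GR} and following the $n=1$ template of \cite{BZF}) produces the transformed entry ${\bf g}_l \mapsto {\bf g}_l^{{\bm \rho}_n}$.

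The next step is to verify that the transformed tuple $({\bf g}_l^{{\bm \rho}_n},{\bf z}_l)$ still lies in $\Theta(G,{\bf V}_l,{\bf U}_k)$: the axioms \eqref{cond1}, \eqref{loconj}, and \eqref{shushu} are intertwined with the ${\rm Aut}\;\Oo^{(ln)}$-action, and the pole-order bounds $\beta(g_{l',i},g_{l'',j})$ depend only on weights, which are preserved. Once this is in place, the identity $\overline{F}({\bf g}_l,{\bm \rho}_n({\bf z}_l))=\overline{F}({\bf g}_l^{{\bm \rho}_n^{-1}},{\bf z}_l)$ holds entry by entry, and the torsor structure—the set of admissible formal coordinate systems on ${\bf V}_l$ is a simply transitive ${\rm Aut}\;\Oo^{(ln)}$-set—implies that the value of $\overline{F}$ does not depend on the choice of formal variables. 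The main obstacle I anticipate is the multidimensional ($n\ge 2$) extension of the exponentiation argument of \cite{BZF}: one must check that the exponential of the higher-order vector fields, when applied to $G$, respects the two absolute-convergence conditions of Definition~\ref{defcomp}, namely convergence of $\sum_{{\bf m}_n\in\Z^n} F(\overline{P_{m_i}f_i},{\bm \zeta}_l)$ and of the insertion series $\sum_{q\in \C} F(\overline{\nu}_G(x_k)\,{\bf P}_q(F({\bf x}_{(k,k+l)})))$. This reduces to showing that the filtration-completeness of $G$ is compatible with the coordinate-change operators term by term, which in turn follows from the local nilpotency of ${\rm Der}_+\;\Oo^{(ln)}$ on each graded component.
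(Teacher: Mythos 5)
Your proposal is correct and follows essentially the same route as the paper: both transfer the action of ${\rm Aut}\;\Oo^{(ln)}$ on the formal variables to an action on the entries ${\bf g}_l$ by passing to the exponential form of the $\rho_i$ with the recursively determined coefficients $\beta^{(j)}_{{\bf r}_l}$ (translation absorbed via \eqref{ldir1}, the linear part via the $K_G$-property \eqref{loconj}, the higher-order part via the locally nilpotent action of ${\rm Der}_+\;\Oo^{(ln)}$), which is precisely the content of the operator $\widehat\partial_J\rho_a$ in \eqref{hatrho}. The one step the paper makes explicit that you leave implicit is the chain-rule transformation of the differentials ${\bf dz}_{i(ln)}$ inside $\overline{F}$ via the Jacobian-type vector ${\rm R}({\bm \rho}_{ln})$, but this is subsumed in your absorption of the linear and higher-order parts into the graded action on $G$.
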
 
\begin{proof}
Consider the vector  
  $\overline{F}( \widetilde{\bf x}_l )= \left[ 
F \left( {\bf g}_l, \widetilde{\bf z}_l \; {\bf d} \widetilde{\bf z}_{ {\it i}(ln)} \right)  \right]$.
Note that 
 $d\widetilde{z}_j = \sum_{i=1}^{ln} dz_i \;  
{\partial_{z_i} \rho_j}$,
$\partial_{z_i} \rho_j = 
{\partial \rho_j}/{\partial z_i } $.    
By definition of the action of ${\rm Aut}_n\; \Oo^{(1)}_{ln}$,  
 for $d\widetilde{\bf z}_i$,
we have  
%
  $\overline{F}(\widetilde{\bf x}_l) = 
 {\rm R}( {\bm \rho}_{ln}) \; 
 \left[  F \left( {\bf g}_l, 
{\bf z}_l \; {\bf d} \widetilde{\bf z}_{i(ln)} \right)
\right]
 = {\rm R}({\bm \rho}_{ln}) \; 
\left[  F \left( {\bf g}_l, 
{\bf z}_l \;  
\sum {}^{ln}_{j=1} \; \partial_j \rho_{i(ln)} \; dz_j  \right)
\right]$,   
with  
${\rm R} ({\bm \rho}_{ln})=$\\   
 $\left[ \widehat \partial_{J} \rho_{i(I)}\right]
=
\left[
\begin{array}{c}
\widehat \partial_J \rho_{i_1(I)}, 
\widehat \partial_J \rho_{i_2(I)},  
\ldots,   
\widehat \partial_{J} \rho_{i_{ln}(I)}    
\end{array}
\right]^T$. 
 The index operator $J$ takes the value of index $z_j$ of arguments in the vector 
$\overline{F}(\widetilde{\bf x}_l)$  
while the index operator $I$ takes values of index of differentials $dz_i$ in each entry of 
the vector $\overline{F}$.  
 The index operator 
$i(I)=(i_1(I), \ldots, i_{ln}(I))$      
is given by consequent cycling permutations of $I$. 
Taking into account the property \eqref{ldir1},  
we define the operator 
\begin{equation}
\label{hatrho}
\widehat\partial_J \rho_a =   
\exp \left(   - \sum { }_{ {\bf q}_{ln}, \; 
\sum\limits_{i=1}^{ln} q_i  \ge 1, \; 1 \le J \le ln }  
r_J\; \beta^{(a)}_{{ \bf r}_n }\; ({\bm \zeta})_{ln}   
\; \partial_{z_J} 
 \right), 
\end{equation}
which contains index operators $J$ as index of a  
dummy variable $\zeta_J$ turning into $z_j$, $j=1, \ldots, ln$.  
\eqref{hatrho} acts on each argument of maps $F$ in the vector $\overline{F}$.    
Due to properties of $G$ required above, 
the action of operators $R\left({\bm \rho}_{ln} \right)$ 
 on ${\bf g}_l \in G$ results in a sum of finitely many terms.
 By using \eqref{ldir1} and linearity of the mapping $\overline{F}$, 
 we obtain 
$\overline{F}(\widetilde{\bf x}_l) 
=
\overline{F} ({\bf g}_l, \widetilde{\bf z}_l \;{\bf d}\widetilde{z}_l)  
=
 \left[
 F \left(  {\bf g}_l, {\bf z}_l \; {\bf dz}_{i(ln)} \right)   
\right]$.  
%
We then conclude that  
 the vector $\overline{F}$  is invariant, 
i.e., 
 $\overline{F} (\widetilde{\bf x}_l) = \overline{F} ({\bf x}_l)$.     
 Definition \ref{defcomp} of restricted meromorphic functions 
$\overline{F}({\bf x}_l) \in \Theta\left(G, {\bf V}_l, {\bf U}_k \right)$ 
consists of two conditions on $\overline{F}({\bf x}_l)$ and $\overline{\nu}_G({\bf z}'_k)$.  
 The first requires existence of positive 
integers $\beta^n_m(g_i, g_j)$ depending on $g_i$, $g_j$ only, and the second 
restricts orders of poles of corresponding sums. 
The insertions of a sequence of $k$ forms $\overline{\nu}_G ({\bf x}')$  
 which are present in Definition \ref{defcomp} of prescribed rational functions 
keep elements $\overline{F}$ invariant with respect to coordinate changes.   
Thus, elements of $\Theta(G, {\bf V}_l, {\bf U}_k)$-spaces are invariant under the action of the group 
${\rm Aut}_{ln+kp} \;  \Oo^{(1)}_{ln+kp}$.  
\end{proof}
\section{Cosimplicial double complex defined for a foliation}  
\label{pisa}
In this Section we introduce spaces for double complexes used to define the restricted meromorphic functions 
cohomology of a foliation $\mathcal F$ of  
codimension $p$ on a smooth complex manifold $M$ of dimension $n$.  
\subsection{Foliation holonomy embeddings and transversal basis} 
\label{pupa}
Let us first recall \cite{CM} some definitions concerning transversal basis and 
 holonomy embeddings for a foliation $\F$.  
A transversal section is an embedded $p$-dimensional submanifold $U\subset M$ 
everywhere transverse to all leaves of $\F$.  
Suppose $\alpha$ is a path between two points 
$p$ and $\widetilde{p}$ on the same leaf of $\F$. 
Let $U$ and $\widetilde{U}$ be transversal sections comming through points $p$ and $\widetilde{p}$. 
 Then $\alpha$ determines a transport
 along the leaves from a neighborhood of $p$ in $U$ to a neighborhood of $\widetilde{p}$ in $\widetilde{U}$. 
Thus it is assumed that there exists a germ of a diffeomorphism
${\rm hol}(\alpha): (U, p)\rmap (\widetilde{U}, \widetilde{p})$ which is 
called the holonomy of the path $\alpha$.  
In the case that the transport above 
 is defined in all of $U$ and embeds $U$ into $\widetilde{U}$, this embedding
$h: U\hookrightarrow \widetilde{U}$ is called the holonomy embeddings.   
Now recall the definition of the transversal basis for a folitaion $\F$.  
A transversal basis for $\F$ is a family $\U$ of transversal sections $U \subset M$  
with the property that, if $\widetilde{U}$ is any transversal section through a given point $p \in M$, 
there exists a
holonomy embedding $h: U\hookrightarrow \widetilde{U}$ with $U\in \U$ and $p\in h(U)$.
\subsection{Restricted meromorphic functions associated to a foliation}
\label{functional}
Suppose $M$ is endowed with a coordinate chart  
$\V=\left\{ {\bf V}_m, m\in \Z \right\}$.  
Let ${\bf p}_l$ be a set of $l$ points 
in ${\bf V}_l$.  
Let us identify $l$-sets of $n$-tuples of formal variables ${\bf z}_l$ with $l$ 
sets of $n$ local coordinates on ${\bf V}_l$. 
Note that according to our construction, $M$ can be infinite-dimensional. 
Thus, in that case, 
 we consider $l$ infinite sets of complex coordinates. 
For another $k$ sets of $p$-tuples of points ${\bf p}'_k$ 
on transversal sections ${\bf U}_k$ of the transversal basis $\U$ of foliation $\F$,    
 we take $k$ sets of $p$-tuples of ${\bf x}'_k=({\bf g}'_k , {\bf z}'_k)$, 
 identify formal parameters ${\bf z}'_k$ with local coordinates of $k$ points ${\bf p}'_k$, 
and consider corresponding series 
  $\overline{\nu}_G \left( {\bf x}'_k \right)$.  
While the choice of the initial point $p'_0$ is arbitrary, we assume 
that other points are related 
by the holonomy embeddings ${\bf h}_k$, 
$p'_0 \stackrel{h_1}{\rmap}p'_1 \stackrel{h_1}{\rmap} \ldots \stackrel{h_k}{\rmap} p'_k$. 
It may happened that $z_{1,j}$ coincides with some $z_{p,j}$. 
In order to work with objects having coordinate invariant formulation on $M$   
we consider restricted  
meromorphic  
functions $\overline{F}({\bf x}_l) \in \Theta\left(G, \V, \U \right)$.    
In \cite{BZF}, they proved that $\nu_G(x)$ is an invariant object with respect to 
changes of coordinate in one-dimensional complex case. 
In Section \ref{properties} we proved that elements of the space $\Theta(G, {\bf V}_l, {\bf U}_k)$ and sets of 
forms   
  $\overline{\nu}_G({\bf x}'_k)$ are invariant with respect  
to change of coordinates, i.e., to the group of coordinate transformations ${\rm Aut}\; \Oo^{(ln)}$, 
$\widetilde{\bf z}_l \mapsto  
{\bf z}_l$, and corresponding differentials.  
\subsection{Spaces for cosimplicial double complexes} 
In \cite{GF} the original approach to cohomology of vector fields of manifolds 
was initiated.   
We find 
 another approach  to cohomology   
 of Lie algebra of vector fields on a manifold in the cosimplicial setup 
in \cite{Fei, Wag}.   
Taking into account the standard methods of defining canonical (i.e., independent of the choice  
of covering $\U$) cosimplicial object \cite{Fei, Wag} as well as the ${\rm \check{C}}$ech-de Rham 
cohomology construction \cite{CM} for a foliation,  
 we consider restricted meromorphic functions  
$F({\bf x}_l)$ with $k$ sets of forms $ \overline{\nu}_G({\bf x}'_k)$    
 and give the following definition of a cosimplicial double complex for $G$.    

For ${\bf x}_l$ with local coordinates ${\bf z}_l \in \V$, and  
 a transversal basis $\U=\left\{U_i, i \in I\right\}$ for $\F$ on $M$, 
consider vectors $\overline{F}({\bf x}_l) \in \Theta(G, {\bf V}_l, {\bf U}_k)$
 defined in Subsection \ref{puma}. 
Then let us associate to any subset 
$\left\{i_1< \cdots \right.$ $\left. < i_k\right\}$ of $I$, 
the space of restricted meromorphic functions with $k$ sets of $\overline{\nu}_G({\bf x}')$-forms 
with local coordinates ${\bf z}' \in {\bf U}_l$, satisfying Definition \ref{defcomp}, and  
 defined on the intersection of transversal sections 
\begin{equation}
\label{ourbi-complex} 
 C^l_k(G, \V, \U, \F) = \Theta \left(G, \V,  
\bigcap_{ U_{i_0}\stackrel{h_{i_0}}{\rmap}  \ldots \stackrel{h_{i_{k-1}}}{\rmap} U_{i_k},  \; 
 i_1 \le \ldots \le i_k, \; k \ge 0 
 } 
U_{ i_k},  
\right), 
\end{equation}
where the intersection ranges over all $k$-tuples of holonomy embeddings ${\bf h}_k$,  
among transversal sections of foliation in $\U$. 
Since we assume that the points ${\bf p}'_k$, are related by holonomies,  
each consequent arrow  $h_j$ in the holonomy sequence in the intersection \eqref{ourbi-complex} 
introduces
$p$ $\nu_G$-forms in addition to the initial $(j-1)$ sets of forms with formal parameters.   
\subsection{Properties of the double complex spaces $C_{k}^{l}(G, \V, \U, \F)$}
\label{properties}
 In this subsection we fix $G$ and $\F$ and 
omit them and $\V$ from notations where it is possible, and  study 
  properties of $C_k^l(\U)$ spaces.  
Let us set 
$C_k^{0}(\U)= G$. 
 Then we have 
\begin{lemma}
The spaces \eqref{ourbi-complex} are non-zero, and $C_k^l(\U) \subset  C_{k-1}^l(\U)$. 
\end{lemma}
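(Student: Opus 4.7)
The plan is to address the two parts separately, exhibiting an explicit non-trivial element for non-vanishing and constructing a natural forgetful embedding for the inclusion.

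For the non-vanishing claim, I would exhibit an explicit element. The most convenient candidate is the constant map $F \equiv 1$ (equivalently, the restricted meromorphic function obtained by pairing the vacuum-like grading-zero vector of $G$ against itself via the non-degenerate pairing of Subsection~\ref{functional}), combined with $k$ distinguished form-sets $\overline{\nu}_G({\bf x}'_k)$ taken at the grading-zero level. This data trivially satisfies the $T_G$-derivative property \eqref{cond1}, the scaling property \eqref{loconj}, the shuffle identity \eqref{shushu}, and all the convergence conditions in Definition~\ref{defcomp} (since a constant is absolutely convergent everywhere, has no singular diagonal, and trivially admits any pole-order bound $\beta(g_{l',i}, g_{l'',j})$). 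Hence the pair lies in $C_k^l(\U)$ for every admissible $l$, $k$, giving a non-zero element.

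For the inclusion $C_k^l(\U) \subset C_{k-1}^l(\U)$, the natural map is the forgetful one: given a vector $\overline{F}({\bf x}_l)$ paired with $k$ form-sets $\overline{\nu}_G({\bf x}'_k)$, I would view it as the same vector $\overline{F}({\bf x}_l)$ paired with only the first $k-1$ form-sets, suppressing (or equivalently, specializing to a grading-zero element) the last form. The intrinsic properties of $\overline{F}$ --- namely \eqref{cond1}--\eqref{loconj} and the shuffle \eqref{shushu} --- depend only on $\overline{F}$ and its arguments ${\bf x}_l$, hence are unaffected by the forgetting. For the convergence conditions of Definition~\ref{defcomp}, I would note that the bounds on pole orders $\beta(g_{l',i}, g_{l'',j})$ and the absolute convergence of the relevant series with $k$ insertions specialize directly to the corresponding statements with $k-1$ insertions: any configuration of $(k-1)$ form-sets arises from a $k$-insertion configuration by trivializing one form, and reducing the number of insertions can only weaken the convergence hypothesis.

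The domain-intersection compatibility requires a small observation: any $(k-1)$-chain of holonomy embeddings $U_{i_0}\stackrel{h_{i_0}}{\rmap} \cdots \stackrel{h_{i_{k-2}}}{\rmap} U_{i_{k-1}}$ extends, by choosing an arbitrary holonomy $U_{i_{k-1}}\stackrel{h_{i_{k-1}}}{\rmap} U_{i_k}$, to a $k$-chain; since the definition intersects over \emph{all} chains of the appropriate length and a function in $C_k^l(\U)$ is well-behaved on every such intersection, its restriction to the $(k-1)$-chain terminal sections meets the requirements of $C_{k-1}^l(\U)$. The main obstacle will be checking that the order-$\beta$ pole bounds are genuinely inherited under this specialization without loss of sharpness: one needs to confirm that the integers $\beta$ governing poles in the $(k-1)$-insertion setting are consistent with (indeed, imply) those in the $k$-insertion setting; this amounts to an examination of how the auxiliary forms $\overline{\nu}_G({\bf x}')$ interact with the diagonal singularities of $F$, which follows from the structure of $\overline{\nu}_G$ as a $dz$-degree-one series and the absence of any additional diagonals introduced by dropping a form.
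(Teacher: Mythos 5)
Your proof is essentially correct in spirit but takes a genuinely different route from the paper on the non-vanishing part, and it has one point that needs repair. The paper does not exhibit any explicit element: it derives both claims from a single monotonicity observation, namely that the convergence conditions of Definition~\ref{defcomp} come with a lower bound on the domain of absolute convergence, so that extending the tuple of holonomy embeddings in \eqref{ourbi-complex} by one more embedding preserves the conditions imposed on $\overline{F}$; non-vanishing and the inclusion $C_k^l(\U)\subset C_{k-1}^l(\U)$ are then both read off from this stability. Your forgetful-map argument for the inclusion is the same idea made explicit (and your discussion of extending a $(k-1)$-chain of holonomies to a $k$-chain is more careful than anything in the paper), so that half is a refinement rather than a departure. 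Your non-vanishing argument via an explicit element is more concrete and, frankly, more convincing than the paper's, which concludes ``non-zero'' from the preservation of conditions without producing a witness.

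The point that needs attention is the candidate element itself. A literal constant $F\equiv 1$ does \emph{not} satisfy the $T_G$-derivative property \eqref{cond1}: the left-hand side $\partial_{z_i}F$ vanishes while the right-hand side $F((T_G)_i{\bf g}_l,{\bf z}_l)$ would again be the constant, unless the inserted elements are annihilated by $T_G$. Your parenthetical about pairing a ``vacuum-like grading-zero vector'' is exactly the needed fix, but the paper's axioms for $G$ (the completion of the space of $U(\mathfrak g)$-valued series, graded and bounded below) never explicitly posit an element $\vac$ with $T_G\vac=0$ and $K_G\vac=0$; you would need to either add that hypothesis or extract such an element from the unit of $U(\mathfrak g)$. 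Given the level of rigor of the source, this is a minor gap, but as written the phrase ``trivially satisfies \eqref{cond1}'' is false for the constant map and true only for the vacuum-insertion version.
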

\begin{proof}
Recall the conditions on $k$ $\overline{\nu}_G({\bf x})$-forms given in Definition \eqref{defcomp}.   
Since exists the lower limit on domain of absolute convergence given in Definition \ref{defcomp},  
the extension of the tuple of $k$-homology embeddings in \eqref{ourbi-complex} 
by another embedding preserves the  
 conditions  applied   
to the mappings $\overline{F}({\bf x})$ which belong to the spaces \eqref{ourbi-complex}.
Thus, \eqref{ourbi-complex} is non-zero. 
\end{proof}
\begin{lemma}
For any $l$, $p \ge 0$,  the construction \eqref{ourbi-complex} of double complex spaces  
$C_{k}^{l}(\U)$  
does not dependent of the choice of transversal basis $\U$. 
\end{lemma}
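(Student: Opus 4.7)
The plan is to reduce independence of the transversal basis to the coordinate-change invariance already established in \lemref{kuzya}, by passing through a common refinement. Given two transversal bases $\U$ and $\U'$ for $\F$, I would first observe that their union $\mathcal{W} = \U \cup \U'$ is again a transversal basis, and prove $C_k^l(\U) = C_k^l(\mathcal{W}) = C_k^l(\U')$, from which the lemma follows.

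First, I would establish the inclusion $C_k^l(\mathcal{W}) \subset C_k^l(\U)$: any sequence of holonomy embeddings $U_{i_0} \stackrel{h_{i_0}}{\rmap} \cdots \stackrel{h_{i_{k-1}}}{\rmap} U_{i_k}$ among sections of $\U$ is automatically a sequence in $\mathcal W$, so the intersection appearing in \eqref{ourbi-complex} for $\mathcal{W}$ is contained in the analogous intersection for $\U$, and restriction of the underlying $\overline{F}$ preserves all conditions of \defref{defcomp}.

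For the opposite direction $C_k^l(\U) \subset C_k^l(\mathcal{W})$, I would use the defining property of $\U$ as a transversal basis: for any section $W \in \mathcal{W} \setminus \U$ (hence $W \in \U'$) and any point $p \in W$, there exist $U \in \U$ and a holonomy embedding $h: U \hookrightarrow W$ with $p \in h(U)$. Given any sequence of holonomy embeddings $W_{j_0} \rmap \cdots \rmap W_{j_k}$ in $\mathcal{W}$, I can factor each $W_{j_i}$ locally through a section $U^{(i)} \in \U$, obtaining a refined sequence involving sections from $\U$. The germ of the resulting composition at each point is a composition of holonomies, so the restricted meromorphic function $\overline{F} \in C_k^l(\U)$ can be transported to the $\mathcal{W}$-intersection by the change-of-coordinate data encoded in the holonomy germs. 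By \lemref{kuzya}, the transported vector $\overline{F}(\widetilde{\mathbf{x}}_l) = \overline{F}(\mathbf{x}_l)$ agrees with the original, and the forms $\overline{\nu}_G(\mathbf{x}'_k)$ likewise transform canonically under the ${\rm Aut}\, \Oo^{(ln+kp)}$-action, so the resulting element lies in $C_k^l(\mathcal{W})$ and is well-defined independently of the chosen refining embeddings.

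The main obstacle is verifying that the convergence conditions of \defref{defcomp} --- the existence of the bounds $\beta(g_{l',i}, g_{l'',j})$ on orders of poles and the absolute convergence of the relevant series in $\mathbf{z}_{l+k}$ --- are preserved under the transport by holonomy embeddings and the implicit coordinate changes on transversal sections. Since these conditions are local in nature (they concern behavior on the diagonal and on nested disks determined by $|\zeta_i - \zeta_j|$) and holonomy embeddings are local diffeomorphisms between open subsets of transversal sections, their pullback/pushforward sends the admissible analytic extensions to admissible ones of the same type. Combining this locality with the fact that any two choices of refining sections from $\U$ covering a common point differ by an element of ${\rm Aut}\, \Oo^{(p)}$ --- under which $\overline{F}$ and $\overline{\nu}_G$ are invariant by \lemref{kuzya} --- one concludes that the identification $C_k^l(\U) = C_k^l(\mathcal{W})$ is canonical and the lemma follows.
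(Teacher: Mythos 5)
Your proposal is correct and its essential engine is the same as the paper's: the defining property of a transversal basis supplies holonomy embeddings relating sections of one basis to sections of the other, and these embeddings carry the data defining $C_k^l$ from one basis to the other. The difference is in organization and in how much is actually verified. The paper's proof consists of the single observation that for each $U_i\in\U$ there is a holonomy embedding $\widetilde h_i:U_i\hookrightarrow U_j'$ into some section of the second basis, and then asserts the conclusion; it compares $\U$ and $\U'$ directly and says nothing about why the conditions of Definition~\ref{defcomp} or the intersection in \eqref{ourbi-complex} are preserved. You instead pass through the common refinement $\mathcal W=\U\cup\U'$, prove the two inclusions $C_k^l(\mathcal W)\subset C_k^l(\U)$ and $C_k^l(\U)\subset C_k^l(\mathcal W)$ separately, and explicitly invoke Lemma~\ref{kuzya} to argue that the transported $\overline F$ and the forms $\overline\nu_G$ are unchanged under the coordinate changes induced by the holonomy germs, together with a locality argument for the convergence and pole-order conditions. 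Your route is therefore a strictly more detailed version of the same idea: the refinement trick gives you symmetric, checkable inclusions rather than a one-directional embedding, and the appeal to coordinate invariance supplies the step the paper leaves implicit. Given how underspecified the dependence of $\Theta(G,\V,\cdot)$ on its domain argument is in the paper, your restriction-map reading of the first inclusion is as precise as the definitions permit, and I see no gap relative to the standard the paper itself sets.
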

\begin{proof}
Suppose we consider another transversal basis $\U'$ for $\F$.  
The of the double complex spaces then assumes that vectors $\overline{F}$ on $U'_j$ are defined 
on all transversal sections of $\U'$.  
According to the definition of the transversal basis given above,  
for each transversal section $U_i$ which belongs to the original basis $\U$ in \eqref{ourbi-complex}   
 there exists a holonomy embedding  
$\widetilde{h}_i: U_i \hookrightarrow U_j'$,  
i.e., it embeds $U_i$ into a section $U_j'$ of our new transversal basis $\U'$.  
Then then statement of the proposition follows. 
\end{proof}
In what follows, we omit $\U$ from notations of \eqref{ourbi-complex}. 
Next, we prove that the construction of spaces  \eqref{ourbi-complex}  for 
 the chain-cochain double complex 
is independent of 
the choice of  coordinates on $\V$ and $\U$.  
\begin{proposition}
 Elements $\overline{F} \left({\bf x}_l \right) \in C^l_k$ with 
 ${\bf x}_l \in \V$  
 and the forms 
 $\overline{\nu}_G({\bf x}'_k)$, with ${\bf x}'_k \in {\bf U}_k$ of $\F$      
   are canonical, i.e., independent on 
 changes 
$({\bf z}_l, {\bf z}'_k) \mapsto \left( \widetilde{\bf z}_l, \widetilde{\bf z}'_k\right)=
  {\bm \rho}_{l+k} \left({\bf z}_l,  {\bf z}'_k\right)$   
of local coordinates on $\V$ and $\U$.  
\end{proposition}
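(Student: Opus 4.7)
The plan is to split the coordinate change ${\bm \rho}_{l+k}$ into its action on the ambient variables ${\bf z}_l$ and its action on the transversal variables ${\bf z}'_k$, and to treat each piece with a tool already in place. First, I would factor ${\bm \rho}_{l+k}$ as a composition of two transformations: one belonging to ${\rm Aut}\;\Oo^{(ln)}$ acting on the bulk coordinates ${\bf z}_l \in \V$ alone, and one belonging to ${\rm Aut}\;\Oo^{(kp)}$ acting on the transversal coordinates ${\bf z}'_k \in \U$ alone. Since the two groups of variables play separate roles in $\overline{F}({\bf x}_l)$ (as arguments of the meromorphic functional) and in $\overline{\nu}_G({\bf x}'_k)$ (as arguments of the $\nu_G$-forms), it is enough to prove invariance under each factor separately.

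For the bulk part, Lemma \ref{kuzya} already delivers invariance of the vector $\overline{F}({\bf x}_l)$ under the action of ${\rm Aut}\;\Oo^{(ln)}$, via the exponential representation of $\widehat\partial_J \rho_a$ in \eqref{hatrho} together with the $T_G$- and $K_G$-properties \eqref{cond1}, \eqref{loconj}. For the transversal part, I would invoke the $n=1$ result from \cite{BZF} that $\nu_G(x)$ is canonical under coordinate changes, and extend it coordinate-wise to the $p$-dimensional setting, exploiting the fact that each of the $k$ sets of $\overline{\nu}_G({\bf x}'_k)$-forms is a product of one-variable $\nu_G$-forms whose combined differentials $d\widetilde{\bf z}'_k$ pull back under the Jacobian of ${\bm \rho}_{kp}$ by exactly the same chain of identities used in Lemma \ref{kuzya}, with the derivative operators absorbed into corresponding actions on ${\bf g}'_k \in G$ by the locally nilpotent action of ${\rm Der}_+\;\Oo^{(kp)}$.

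Next, I would verify that this invariance is compatible with the holonomy structure appearing in \eqref{ourbi-complex}: since the points ${\bf p}'_k$ are linked by holonomy embeddings $h_1,\ldots,h_k$, a coordinate change on one transversal section $U_{i_j}$ transports to the neighboring section $U_{i_{j+1}}$ via the differential of $h_j$, which is itself an element of ${\rm Aut}\;\Oo^{(p)}$; so the transversal piece of invariance propagates along the holonomy chain and respects the intersection over which $C^l_k$ is defined. Finally, combining the two invariances and using the torsor structure on the relevant formal automorphism groups yields that the pair $(\overline{F}, \overline{\nu}_G)$ is independent of the chosen local coordinates on $\V$ and $\U$.

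The main obstacle I anticipate is bookkeeping the mixed Jacobian contributions when ${\bm \rho}_{l+k}$ does not split cleanly, i.e., when a transformation could in principle couple ${\bf z}_l$ and ${\bf z}'_k$. Since $\V$ and $\U$ are independent coordinate systems (the transversal sections are embedded submanifolds meeting leaves transversally), such coupling terms should vanish, but one must check this carefully: the absolute convergence clauses in Definition \ref{defcomp}, which control the insertions of the $\overline{\nu}_G({\bf x}'_k)$-forms into $\overline{F}({\bf x}_l)$ through the projections $P_{m_i}$, must be preserved under the change of variables. This follows because the operators $\widehat\partial_J \rho_a$ act as finite polynomial expressions in the grading operator $K_G$ and its shifts, so they preserve the filtration and hence the domain of convergence specified by the constants $\beta(g_{l',i}, g_{l'',j})$.
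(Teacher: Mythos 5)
Your proposal follows essentially the same route as the paper's own proof: it reduces the bulk invariance of $\overline{F}({\bf x}_l)$ to Lemma \ref{kuzya}, and handles the transversal $\overline{\nu}_G({\bf x}'_k)$-forms by extending the one-dimensional invariance of $\nu_G$ from \cite{BZF} via the torsor identification $\nu_G(g'_i, z'_i) = R_i(\rho)\,\nu_G(g'_i,\rho({\bf z}'))\,R_i^{-1}(\rho)$ on $p$-dimensional transversal sections. Your additional checks (that ${\bm \rho}_{l+k}$ splits into ${\rm Aut}\;\Oo^{(ln)}\times{\rm Aut}\;\Oo^{(kp)}$ factors, and that invariance propagates along the holonomy chain) are points the paper leaves implicit, but they do not change the argument's substance.
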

\begin{proof}
In Lemma \ref{kuzya} we proved that elements of $\overline{F}({\bf x}_l) \in \Theta(G, \V, \U)$  
are coordinate change-invariant. 
The construction of the double complex spaces \eqref{ourbi-complex} assumes 
that $\overline{F}({\bf x}_l) \in C^l_k$ satisfies conditions of definition of a $\Theta$-space and 
 with extra conditions of Definition \ref{defcomp} on $k$ sets of $p$ $\overline{\nu}_G({\bf x}_k)$-forms.   
In \cite{BZF} they proved in one-dimensional complex case, that 
 the form $\nu_g(x)$ containing the $\wt(g_i)$-power of the differential $dz$  
is invariant with respect to the action of the group ${\rm Aut}\;\Oo^{(1)}$. 
Here we prove that  $\overline{\nu}_G ({\bf x}'_k)$ 
are invariant with respect to the change of $k$ sets of $p$ local coordinates  
${\bf z}'_p \mapsto \widetilde{\bf z}'_p ({\bf z}'_p)$ on a transversal section of $\F$.  
 Let ${\bf z}'$ be coordinates on a coordinate 
 chart around a point $p'$ on a transversal section $U \in \U$.    
Define a $\wt(g'_i)$-differential 
on coordinate 
 chart around $p'$ 
with values in $End \; (G)_{{\bf z}'}$ as follows:    
identify $End \;  G_{{\bf z}'}$ with $End \;\;  G_{{\bf z}'}$ using the coordinates ${\bf z}'$. 
Let 
$\widetilde{\bf z}' = ({\bm \rho})_p ({\bf z}')$,  
be another $k$-set of coordinates on an $p$-dimensional coordinates on transversal sections.   
Let us express the set of $\wt(g'_i)$-differentials on $D^{(p'), \times}_{p'}$ 
$\nu_G(g'_i, \widetilde{z}'_i)$,    
 $i=1, \ldots, p$, in terms of 
 of the coordinates ${\bf z}'$.   
We would like to show that it coincides with the set of $\wt(g_i')$-differentials  
$\overline{\nu}_G({\bf z}')$.  
 We will use the notion of torsors in order to prove the independence 
of formal series operators multiplied by some power of differentials for 
for elements $g \in G$ of $\wt(g) \in \Z_+$ such that 
${{\bf z}'}^{m+1} \partial_{{\bf z}'} g = 0$, for  $m > 0$.    
The general case then follows.  
Consider a vector $(g'_i, {\bf z}') \in G_{{\bf z}'}$  with $g'_i \in  G$.        
Then the same vector equals 
$\left( R_i^{-1}\left ( {\bm \rho}_p \right) g'_i, \widetilde{\bf z}' \right)$,  
i.e., it is identified with
$R_i^{-1}\left( {\bm \rho}_p \right) g'_i \in G$,    
using the coordinates $\widetilde{\bf z}'$.   
Here $R_i\left(  {\bm \rho}_p   \right)$ is an operator representing transformation 
of ${\bf z}' \to \widetilde{\bf z}'$, 
as an action on $G$.  
Therefore if we have an operator on $G_{{\bf z}'}$ which 
is equal to a ${\rm Aut} \; \Oo^{(p)}$-torsor $S$  
under the identification $End \; \; G_{{\bf z}'}\in End \; \;  G$ using the coordinates  
$\widetilde{\bf z}'$, 
 then this operator equals
$R_i\left( {\bm \rho}_p \right) \;S \;R_i^{-1} \left( {\bm \rho}_p \right)$,  
 under the identification 
$End \; \;   G_{{\bf z}'} \in End \; \;  G^{(i)}$ using  
the coordinates $\left(g'_i, {\bf z}' \right)$. 
Thus, in terms of the coordinates $(g_i', {\bf z}')$,   
 the differential 
 $\nu_G (g'_i, \widetilde{z}'_i)$    
becomes
$\nu_G(g'_i, z'_i) = R_i(\rho) \; \nu_G \left(g'_i, \rho({\bf z}') \right)  \;  
R_i^{-1} (\rho)$. 
According to Definition \eqref{ourbi-complex}, elements $\F({\bf x}_l)$ satisfy conditions of Definition \ref{defcomp}   
  with the number $kp$ of $\overline{\nu}_g({\bf z}'_k)$ forms. 
Thus we see that $\F$ is a canonical object suitable for $C^l_k$.    
\end{proof}
\subsection{Chain-cochain operators}
\label{coboundaryoperator}
Let us fix $G$ and $\F$ and skip them from notations.  
Recall notations provided in Section \ref{axio}. 
Denote by ${\bf x}_{l, \widehat{i_1}, \ldots, \widehat{i_m} }$, $1 \le i_1 \le \ldots \le i_m \le l$, 
the set ${\bf x}$ with $({\bf x}_{i_1}, \ldots, {\bf x}_{i_m})$ tuples of ${\bf x}$
 being omitted. 
For 
$\overline{F} \in C_k^l$, 
let us define  
the operator $D^l_k$ by  
\begin{eqnarray}
\label{hatdelta}
D^l_k \overline{F}({\bf x}_l) &=& T_1( \overline{\nu}_G({\bf x}_1 )) 
.\overline{F}\left({\bf x}_{l+1, \widehat 1 }\right)     
\nn
&+&\sum_{i=1}^{l }(-1)^{i} \;  
T_i( \overline{\nu}_G({\bf x}_{i})  ) \; T_{i+1}(\overline{\nu}_G({\bf x}_{i+1})).  
 \overline{F} \left( {\bf x}_{l+1, \widehat{i}, \widehat{i+1} }  
  \right)       
\nn
 &+&(-1)^{l+1}  
 T_1(\overline{\nu}_G({\bf x}_{l+1})).\overline{F}\left( {\bf x}_{\widehat {l+1}}   \right), 
\end{eqnarray}
where $\overline{\nu}_G({\bf x}_m)=\left[ \nu_G(x_{m, 1}), \ldots, \nu_G(x_{m, n}) \right]^T$, 
 and $T_i(\gamma).\overline{F}({\bf x}_{l+1})$ denotes insertion of $\gamma$ at $i$-th 
position of $\overline{F}({\bf x}_{l+1})$.  Next, we have 
\begin{proposition}
\label{cochainprop}
The operator \eqref{hatdelta} 
 forms the double complex 
  $D^l_k: C_k^l 
\to  C_{k-1}^{l+1}$     
on the spaces \eqref{ourbi-complex} 
 (when lower index is zero the sequence terminates) and  
   $D_{k-1}^{l+1} \circ  D^l_k=0$. 
\end{proposition}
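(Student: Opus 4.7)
The plan is to split the proposition into two independent claims: (a) that $D^l_k$ maps $C^l_k$ into $C^{l+1}_{k-1}$, and (b) that $D^{l+1}_{k-1}\circ D^l_k=0$. Each is verified by direct inspection of \eqref{hatdelta}, using the defining properties of $\Theta(G,\V,\U)$ in Definition \ref{defcomp} together with the coordinate invariance established in Lemma \ref{kuzya} and the preceding proposition. For (a), the first step is to read off the arity shifts from \eqref{hatdelta}: each summand inserts either one $\overline{\nu}_G({\bf x}_j)$-form (the first and last term) or two adjacent $\overline{\nu}_G$-forms (the middle sum) via operators $T_i(\cdot)$, and correspondingly removes the same number of slots in $\overline{F}$ via the $\widehat{\;\cdot\;}$-notation, yielding an $(l+1)$-slot vector. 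Simultaneously, one set of the $k$ available $\overline{\nu}_G$-forms is consumed by the insertion, so the lower grading drops to $k-1$ and the chain of holonomy embeddings in \eqref{ourbi-complex} shortens by one, matching the target $C^{l+1}_{k-1}$.

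To certify that the output actually lies in $\Theta(G,\V,\U)$, I would invoke both convergence clauses of Definition \ref{defcomp}: the second clause is tailored exactly to the form of insertion $F(\overline{\nu}_G(x_k)\,{\bf P}_q(F({\bf x}_{(k,k+l)})))$ produced by a $T_i(\overline{\nu}_G(\cdot))$ application, and delivers the absolute convergence and pole-order bound needed for membership in the next piece of the bicomplex; the first clause handles the projection-and-sum expansions that arise when the nested compositions are expanded in formal series. Coordinate invariance of the output on $\V$ and $\U$ follows by applying Lemma \ref{kuzya} to the resulting $(l+1)$-tuple together with the coordinate invariance of $\overline{\nu}_G$-forms established in Section \ref{properties}, so that $D^l_k\overline{F}$ indeed inherits the canonical property of $C^{l+1}_{k-1}$.

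For (b), I would expand the composition $D^{l+1}_{k-1}\circ D^l_k\overline{F}$ as a multi-sum and reorganize the terms according to the unordered pair $\{i,j\}$ of slot positions at which $\overline{\nu}_G$-forms are being inserted by the two successive applications of $D$. Each such pair appears in two geometrically distinct ways: first insertion at the $i$-th slot followed by insertion at the (re-indexed) $j$-th slot, or the reversed order; the re-indexing after the first insertion shifts the second index by one, and combined with the alternating factors $(-1)^i$ and $(-1)^j$ in \eqref{hatdelta} the identity $(-1)^i(-1)^{j-1}=-(-1)^j(-1)^i$ matches each pair with its sign-reversed partner, producing pairwise cancellation; the first and last terms of \eqref{hatdelta} contribute the boundary terms of this telescoping, cancelling against the $i=1$ and $i=l$ members of the middle sum. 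The main obstacle will be the sign and slot-index bookkeeping, compounded by the fact that the middle terms of \eqref{hatdelta} already carry two simultaneous $T$-insertions, so $D\circ D$ produces contributions with up to three interacting insertions; I would need to verify that these multi-insertion expressions remain inside the convergence region of Definition \ref{defcomp}, which requires that the pole-order bounds $\beta(g_{l',i},g_{l'',j})$ aggregate consistently under successive insertions and that the iterated holonomy-embedded intersections stay non-empty. Once this compatibility is secured, the vanishing $D^{l+1}_{k-1}\circ D^l_k=0$ reduces to the standard cosimplicial cancellation combinatorics.
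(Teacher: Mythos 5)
Your overall decomposition into (a) well-definedness of the map $D^l_k: C^l_k\to C^{l+1}_{k-1}$ and (b) the chain condition is the right one, and your treatment of (a) --- arity bookkeeping, the two convergence clauses of Definition \ref{defcomp}, and coordinate invariance via Lemma \ref{kuzya} --- is in substance a spelled-out version of what the paper does. Note, however, that the paper does not carry out either verification directly: it reduces componentwise to the $n=1$ situation and cites Proposition 2.8 of \cite{Huang} for (a) and Proposition 4.1 of \cite{Huang} for (b). So your route is genuinely more self-contained, which is a virtue, but it also means you take on the full burden of the proof of $D\circ D=0$, and that is where there is a gap.

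The gap is in (b). The operator \eqref{hatdelta} is not a plain \v{C}ech/cosimplicial differential: its middle terms perform a \emph{double} insertion $T_i(\overline{\nu}_G({\bf x}_i))\,T_{i+1}(\overline{\nu}_G({\bf x}_{i+1}))$ at adjacent slots, so it is a Hochschild-type differential. Your sign argument $(-1)^i(-1)^{j-1}=-(-1)^j(-1)^i$ correctly kills the pairs of terms in $D^{l+1}_{k-1}\circ D^l_k$ whose two insertions act at non-interacting positions, but it says nothing about the terms where the second application of $D$ inserts into, or adjacent to, the slots just created by the first. For those terms cancellation is not combinatorial at all: it requires an associativity-type identity for iterated $\overline{\nu}_G$-insertions, namely that the two orders of inserting and then analytically extending (via the projections $P_m$ and the sums over $q$ in Definition \ref{defcomp}) produce the \emph{same} meromorphic function, not merely convergent ones. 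You flag only a convergence issue for the ``up to three interacting insertions,'' but convergence alone does not make these terms cancel; equality of the resulting analytic extensions is the essential input, and it is precisely the content of the associativity/composability statement that Proposition 4.1 of \cite{Huang} encapsulates and that the paper's proof leans on. Without either proving that identity or explicitly invoking it, your reduction of (b) to ``standard cosimplicial cancellation combinatorics'' does not close.
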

\begin{proof}
Note that $(l+1)n$ formal variables ${\bf z}_{l+1}$ in the Definition \eqref{ourbi-complex} are identified with 
coordinates of $l+1$ arbitrary points on $\V \subset M$ not related to coordinates 
on transversal sections. 
 By Proposition 2.8 
of \cite{Huang},   
$D^l_k \overline{F}({\bf x}_l)$ satisfies Definition \eqref{defcomp} for 
 $(k-1)$ $\overline{\nu}_G({\bf x}'_{k-1})$-forms and has the $T_G$-derivative  \eqref{cond1} 
property and the $K_G$-conjugation  \eqref{ldir1} properties according of Subsection \ref{functional}.    
So $D^{l}_{k}\overline{F}({\bf x}_l) \in 
C_{k-1}^{l+1}$ and $D^l_k$ is indeed a map whose image is in 
$C_{k-1}^{l+1}$. 
In \cite{Huang} we find the construction of double chain-cochain complex for $n=1$ case and various $l \ge 0$.
In particular, (c.f. Proposition 4.1), the chain condition for such double complex was proven.   
Consider now $D^l_k \overline{F}({\bf x}_{l+1})$. 
 By construction of the coboundary operator in each component of the $\overline{F}$  
a $n=1$ case of the action of $D^l_k$ is realized. 
Thus, according to Proposition 4.1 of \cite{Huang}, 
each component of $D^{l+1}_{k-1} \circ D^l_k$ vanishes.  
\end{proof}
According to Proposition \ref{cochainprop}  
 one defines the 
 $(l, k)$-th restricted meromorphic  
function cosimplicial cohomology $H^l_{k, \; cos}(G, \F)$ of a foliation $\F$ of $M$   
to be 
 $H_{k, \; cos}^l(G, \F) ={\rm Ker} \; 
D^l_k/\mbox{\rm Im}\; D^{l-1}_{k+1}$. 
\section{The multiplication of elements of several double complex spaces $C^l_k$} 
\label{multi}
In this Section we fix $G$, $\F$, $\V$ and $\U$, and skip them from further notations. 
In order to introduce and study cohomology invariants associated to 
 a foliation of codimension $p$, 
we first have to define a multiplication 
among elements of several double complex spaces 
$C^l_k$ for various $l$ and $k$. 
The simplest way to define such a multiplication is to associate it to  
a sum of products of restricted meromorphic functions over a basis in $G$, 
and powers of a complex parameter.   
The formal parameters ${\bf z}_{l_i}$, $i \ge 1$ are identified with  
local coordinates of $l_i$ points ${\bf p}_{l_i}$ on $M$. 
 Some $r$ coordinates of points   
among 
${\bf p}_{l_i}$  
may coincide with coordinates of 
points 
among ${\bf p}_{l_j}$, $i \ge 1$, $j \ge 1$, $ik \ne j$.  
Similarly, on transversal section, some $t$ coordinates of points may coincide 
${\bf p}'_{l_i}$ with coordinates of points  ${\bf p}'_{l_j}$ 
In that case we keep only one from each pair of coinciding parameters. 
As it follows from definition of the configuration space $F_{ln}\C$ in Subsection \ref{functional}, 
 in the case of coincidence of two 
formal parameters they are excluded from $F_{ln}\C$. 
Thus,  
we require that the set of formal parameters $(\widetilde{\widetilde{\bf z}}_{l_1+\ldots+l_q-r})$   
 would belong to $F_{l_1+\ldots+l_q-r}\C$. 
This leads to the fall off of the total number of formal parameters for 
$\Theta(l_1+\ldots+l_q-r, k_1+\ldots+k_s-t)$, $q$, $s\ge 1$. 
Let $\{g_m\}$ be a $G_{(m)}$-basis,  
and $\overline{u}_m$ be the dual of $g_m$ with respect to a non-degenerate bilinear pairing  
 $\left( .\ , . \right)$ on $G$.   

Let us introduce the multiplication of elements of $q$ double complex 
spaces $C^{l_i}_{k_i}$, $1\le i \le q$,   
associated with the same foliation of codimention $p$, 
with the image in another double complex space 
 $C^{l_1+\ldots+l_1-r}_{k_1+\ldots+k_s-t}$.   
We assume the same $\V$ and $\U$ for these spaces.   
This multiplication is 
coherent with respect 
to the original coboundary operator \eqref{hatdelta}, and the symmetry property \eqref{shushu}.
Recall that a $C_l^k$-space 
is defined by means of $\overline{F}({\bf x}_{l_i})$  
  satisfying $L_V(0)$-conjugation,  
$L_V(-1)$-derivative conditions, \eqref{shushu}, and composable with $k$ vertex operators.  
For $\overline{F} ({\bf x}_{l_i})\in \Theta(l_i, k_i)$,           
 introduce the multiplication 
\begin{eqnarray}
\label{gendef}
&& *_q: \times_{i=1}^q  \Theta( l_i, k_i) 
\to \Theta(l_1+\ldots+l_q-r, k_1+\ldots+k_s-t),    
\\
\label{perdodo}
 && \overline{F}({\zeta_{a, i}}; {\bf x}_{l_i}; q)  
 =  \sum\limits_{m \in \Z} \lambda^m  
 \sum_{g_m \in G_{(m)}} \prod_{i=1}^q  
  \nu_G \left( \overline{F} ({\bf y}_{l_i}), \zeta_{2, i} \right),  
\end{eqnarray}
 for $1 \le i \le q$, $\lambda \in \C$, 
${\bf y}_{l_i}= ({\bf x}_{l_i}, (\overline{g}_m, \zeta_{1,i}) )$,   
where $\overline{g}_m \in \widetilde{G}$. 
The dependence of \eqref{perdodo} 
on $\lambda$ is assumed via the relations 
$\zeta_{1, i} \zeta_{2, i} = \lambda$,    
$1 \le i \le q$.
We require also the absolute convergence of the restricted meromorphic functions given by 
\begin{equation}
\label{format1}
 {\mathcal M}^{(i)}= \sum_{g_m \in G_{(m)}} 
  \nu_G \left( \overline{F} ({\bf y}_{l_i}), \zeta_{2, i} \right), 
\end{equation} 
 in powers of $\lambda$ with respect to a norm 
with some radiuses of convergence  
$R_{a, i}$ for    
$|\zeta_{a, i}|\le R_{a, i}$, $1 \le i \le q$. 
By the standard reasoning $\overline{F}({\zeta_{a, i}}; {\bf x}_{l_i}; q)$   
  does not depend on the choice of a basis of $\{g_m\}$  of $G_{(m)}$, $m \in \Z$.    
In the next subsection we prove that 
the multiplication \eqref{perdodo} converges as a series in $\lambda$.  
\subsection{The proof of the multiplication convergence}
In this subsection we prove that the multiplication of several restricted meromorphic functions 
defined above converges subject to converging of indivicual elements related to  $\overline{F}({\bf x}_{l_i})$. 
Consider the product \eqref{perdodo}, 
\begin{eqnarray*}
&& || \overline{F}( \zeta_{a, i}; {\bf x}_i; q)||   
  = \left|\left| \sum\limits_{m \in \Z} \lambda^m       
\sum_{g_m \in G_{(m)}} \prod_{i=1}^q 
\nu_G\left( \overline{F}({\bf y}_{l_i}), \zeta_{2, i} \right)\right|\right|  
\nn
&&
\qquad \qquad \qquad \qquad = \left|\left| \sum\limits_{m \in Z} \lambda^m \left( 
  \overline{F}( \zeta_{a, i}; {\bf x}_i; q)  \right)_m  \right| \right|
\end{eqnarray*}
\begin{eqnarray}
\label{pihva}
=  \left|\left|  \sum_{m \in \mathbb{Z}} 
 \sum_{g_m\in V_m} \sum\limits_{n \in \C}
 \lambda^{m-n-1} \; \prod_{i=1}^q 
\left( {\mathcal M}^{(i)}\right)_n\right| \right|,   
\end{eqnarray}
  as a formal series in $\lambda$  
for $|\zeta_{a, i}|\leq R_{a, i}$, $\left|\lambda \right| \le R_{1, i} R_{2, i}$.  
For $R_i=max\left\{R_{1, i}, R_{2, i} \right\}$,  we apply 
 Cauchy's inequality to the 
 coefficient forms \eqref{format1} 
 to find 
\begin{equation}
\label{Cauchya}
 \left| \left| \left({\mathcal M}^{(i)}\right)_n \right|  \right| \leq {M_i} {R_i^{-n}},     
\end{equation}
with 
$M_i=\sup_{ \left| \zeta_{a, i} \right| \leq R_i, \left|\lambda \right|  \leq r} 
\left| \left| \mathcal M^{(i)} \right| \right|$.     
Using \eqref{Cauchya} we obtain for \eqref{pihva} for $M=\min\left\{M_1, \ldots, M_q\right\}$, 
 and $R=\max\left\{R_1, \ldots, R_q \right\}$,  
\begin{eqnarray*}
&&
\left|  \left| \left( 
  \overline{F}( \zeta_{a, i}; {\bf x}_i; q)  \right)_m
 \right| \right| 
\le  \prod_{i=1}^q 
 \left| \left| \left( \mathcal M^{(i)} \right)_n \right| \right| 
  \le  \prod_{i=1}^q 
M_i R_i^{-m+n+1} \le M {R^{-m+n+1}}. 
\end{eqnarray*}
 We see that \eqref{perdodo} is absolute convergent 
 as a formal series in $\lambda$. The extra poles can only appear  
 at $z_{i, j}=z_{i', j'}$, $1\le i \ne i' \le q$, $1\le j \le l_i-r_i$, $1\le j' \le l_{i'}-r_{'i}$, 
where $r_i$ denotes the number of excluded parameters in for $C^{l_i-r_i}_{m_i-t_i}$.    
\subsection{Properties of $*_q$-multiplication of $C^{l_i}_{k_i}$-spaces} 
\label{properties}  
 We study  
 properties of 
the multiplication 
 of several elements of 
the double complex spaces $C^{l_i}_{k_i}$, $1 \le i \le q$.     

According to the previous subsection, the result of multiplication 
it as an absolutely converging function in $\lambda$ 
  on the configuration space $F\C_{(l_1+\ldots+l_q-r)n}$ of ${\bf z}_{l_i}$, $1\le i \le q$,       
with only possible poles at
 $z_i=z_{i'}$, $\widetilde{z}_j=\widetilde{z}_{j'}$, and     
 $z_i=\widetilde{z}_j$, 
$1 \le i, i' \le l_1$, $1 \le j, j' \le l_2-r$, with excluded $\widehat{z}_{l_j}$, and    
parametrized by    
$\zeta_{a, i} \in \C$,   
 with all monomials $(z_{i_l} - z_{j_l})$, $1 \le l \le r$, excluded from  
\eqref{perdodo}.  
We will omit $\zeta_{a, i}$ from further notations for $\overline{F}(\zeta_{a, i}; {\bf x}_{l_i}; q)$.  
We define the action of differentiation 
$\partial_{i,j}=\partial_{ z_{i,j} }={\partial}/{\partial_{ z_{l_i, j}} }$, $1 \le i \le q$, 
  where $r_i$ is the number of skipped parameters for in the $i$-th space, 
on
$\overline{F}({\bf x}_{l_i}; q)$  
with respect to the $j$-th entry, $1 \le j \le l_i$,
 of the $i$-th set $({\bf x}_{l_i})$ of parameters, $1 \le i \le q$,  
 as follows 
\[
\partial_{i,j}  
\overline{F}(   {\bf x}_{l_i}; q) 
  = \sum_{s=1}^q\sum_{m=1}^{l_s} 
 \partial^{\delta_{i, s} \delta_{m, i} }_{z_{m, s}}   
\nu_G\left( 
   \overline{F} ( {\bf x}_{l_i}), \zeta_i\right).
\]
We then define the
action of an element $\sigma \in S_q$ on the multiplication of  
$\overline{F} ({\bf x}_{l_i}; q) \in \Theta(l_1+\ldots+l_q-r, k_1+\ldots+k_s-t)$,   
 as
\begin{eqnarray}
\label{Z2n_pt_epsss}
  \sigma(\overline{F}) \left( {\bf x}_{l_i}; q \right)   
=  \overline{F} \left( {\bf x}_{ (\sigma(l_i)) }, q  \right) 
=    
 \nu_G \left(  
\overline{F} \left(  {\bf x}_{  (\sigma(l_i))  }  \right), \zeta_i    \right).      
\end{eqnarray}
It is elementary to check the following 
\begin{lemma}
The multiplication \eqref{perdodo} satisfies 
the $T_G$-derivative \eqref{ldir1}, $K_G$- conjugation \eqref{loconj}, 
\eqref{shushu} properties, and Definition \ref{defcomp}.  
 \hfill $\qed$  
\end{lemma}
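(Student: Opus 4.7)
The plan is to verify the four required properties in turn, leveraging the fact that each factor $\overline{F}({\bf x}_{l_i}) \in \Theta(l_i, k_i)$ already satisfies them by hypothesis, together with the absolute convergence established immediately above. The strategy is to push each property through the summation over $m \in \Z$, the summation over the graded basis $\{g_m\} \subset G_{(m)}$, and the product over $i = 1, \ldots, q$ in \eqref{perdodo}, treating the auxiliary parameters $\zeta_{1,i}, \zeta_{2,i}$ as formal bookkeeping devices constrained by $\zeta_{1,i}\zeta_{2,i} = \lambda$.

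First I would verify the $T_G$-derivative condition \eqref{cond1} and its exponentiated form \eqref{ldir1}. Since $T_G = \sum_j \partial_{z_j}$ acts termwise on the formal sum and the parameters $\zeta_{a,i}$ are not among the $z_{l_i,j}$, the Leibniz rule together with the $T_G$-derivative property applied to each factor $\overline{F}({\bf y}_{l_i})$ yields the claim: differentiating the product $\prod_i \nu_G(\overline{F}({\bf y}_{l_i}), \zeta_{2,i})$ reduces to inserting the translation operator $T_G$ into a single factor at a time, and the resulting sum telescopes into the desired identity.

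Next I would handle $K_G$-conjugation \eqref{loconj}. The key bookkeeping is that the insertion of the dual basis element $\overline{g}_m$ paired with $g_m$ across a sum $\sum_{g_m \in G_{(m)}}$ carries a definite total weight, and the $K_G$-property of each $\overline{F}({\bf y}_{l_i})$ allows one to pull out the scaling factor $z^{K_G}$ applied to every argument. The weights contributed by the auxiliary arguments $(\overline{g}_m, \zeta_{1,i})$ cancel against those contributed by $\zeta_{2,i}$ via the relation $\zeta_{1,i}\zeta_{2,i} = \lambda$, so the overall scaling on the product is exactly $z^{K_G}$ applied to the ${\bf g}_{l_1 + \cdots + l_q - r}$ arguments. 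The shuffle property \eqref{shushu} is then reduced to the individual shuffles: a shuffle over the combined index set decomposes into block shuffles on each ${\bf x}_{l_i}$-tuple together with a shuffle of blocks, and the signed sum vanishes by iterated application of \eqref{shushu} to each $\overline{F}({\bf x}_{l_i})$.

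Finally, the restricted-meromorphic conditions of Definition \ref{defcomp} split into the existence of pole-order bounds $\beta(g_{l',i}, g_{l'',j})$ and the two absolute-convergence statements. The convergence was established in the previous subsection by Cauchy's inequality \eqref{Cauchya}; the pole bounds are inherited additively from those of the individual spaces $\Theta(l_i, k_i)$, with the potential new poles at $z_{i,j} = z_{i',j'}$ across distinct blocks bounded in terms of the $\beta$-constants of the factors. The main obstacle will be the $K_G$-bookkeeping: matching the weight of $\overline{g}_m \in \widetilde{G}$ against the weight shifts introduced by scaling $\zeta_{1,i}, \zeta_{2,i}$ requires care, as does the treatment of the $r$ and $t$ coincident parameters that are excluded from $F_{(l_1+\cdots+l_q-r)n}\C$, since the scaling must act consistently on the surviving representatives of each coincidence class.
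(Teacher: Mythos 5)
The paper gives no proof of this lemma at all --- it is introduced with ``It is elementary to check the following'' and closed immediately with $\qed$ --- and your termwise verification (pushing each of the $T_G$-, $K_G$-, shuffle, and Definition~\ref{defcomp} conditions through the sum over $m$, the sum over the graded basis, and the product over $i$, reusing the convergence estimate already established via Cauchy's inequality) is precisely the routine check the paper intends. Your outline is sound and, if anything, more explicit than the source; the only step I would flesh out further is the shuffle property, where the decomposition of a shuffle of the concatenated variable set into block shuffles needs to be stated carefully before invoking \eqref{shushu} factorwise.
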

Therefore, the definition of multiplication 
of several restricted meromorphic functions multiplication 
results in a restricted meromorphic function and we obtain 
\begin{proposition}
\label{tolsto}
For $\overline{F}({\bf x}_{l_i}) \in  C^{l_i}_{k_i}$  
the multiplication
$\overline{F} \left({\bf x}_{l_i}; q \right)$  
 \eqref{Z2n_pt_epsss} 
belongs to the space $C^{l_1+\ldots+l_q-r}_{k_1+\ldots+k_s-t} $, i.e.,    
$*_q:  
 \times_{i=1}^q C^{l_i}_{k_i}        
\to 
C_{k_1+\ldots+k_q-t}^{l_1+\ldots+l_s-r}$.  
\hfill $\qed$
\end{proposition}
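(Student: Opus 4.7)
The plan is to reduce the claim to conditions already established. By the definition \eqref{ourbi-complex} of the double complex spaces, membership of $\overline{F}({\bf x}_{l_i}; q)$ in $C^{l_1+\ldots+l_q-r}_{k_1+\ldots+k_s-t}$ amounts to showing that the product is a vector of restricted meromorphic functions in the sense of Definition \ref{defcomp}, that it is invariant under coordinate changes on $\V$ and on transversal sections of $\U$, and that the underlying chain of holonomy embeddings is of the required length $k_1+\ldots+k_s-t$. The first of these requirements breaks into the $T_G$-derivative property \eqref{cond1}, the $K_G$-conjugation property \eqref{loconj}, the shuffle property \eqref{shushu}, and the analytic/convergence conditions of Definition \ref{defcomp}.

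First I would invoke the convergence result of the preceding subsection, which already produces an absolutely convergent $\lambda$-series on the configuration space $F_{(l_1+\ldots+l_q-r)n}\C$ with poles only on the appropriate diagonals. Next I would invoke the Lemma immediately preceding this proposition, which states that $*_q$ preserves the $T_G$-derivative, $K_G$-conjugation, \eqref{shushu}, and the axioms of Definition \ref{defcomp}. Combined with the convergence, these give that $\overline{F}({\bf x}_{l_i}; q)$ is itself a restricted meromorphic function in the parameters that remain after discarding the $r$ coinciding formal variables on $M$ and the $t$ coinciding coordinates on the transversal sections. The pole orders are controlled by the integers $\beta(g_{l',i}, g_{l'',j})$ appearing in each factor, which transfer to the product via the bilinear pairing used to define \eqref{perdodo}.

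For the coordinate change invariance, I would apply Lemma \ref{kuzya} to each factor $\overline{F}({\bf x}_{l_i})$, and use that $\overline{\nu}_G({\bf y}_{l_i})$ transforms canonically under ${\rm Aut}\; \Oo^{(l_in)}$ acting on the $i$-th block of variables, as in the proposition of Subsection \ref{properties}; the pairing summation over the basis $\{g_m\}$ is basis-independent, and hence the product is invariant under the action of ${\rm Aut}\;\Oo^{((l_1+\ldots+l_q-r)n+(k_1+\ldots+k_s-t)p)}$. For the holonomy structure, I would observe that each factor carries a chain of $k_i$ holonomy embeddings among transversal sections of $\U$, and that since the $t$ identified points on the transversal sections sit in the common intersection, the concatenated chain still passes through all relevant sections, yielding exactly $k_1+\ldots+k_s-t$ independent holonomy arrows.

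The main obstacle will be the bookkeeping of the parameter identifications: ensuring that, after removing the $r$ repeated $M$-coordinates and the $t$ repeated transversal coordinates, the result still satisfies Definition \ref{defcomp} with the correct $\beta$ bounds, and that the $T_G$- and $K_G$-properties of each factor aggregate to the same properties for the product without introducing spurious singularities on the excluded diagonals. This is handled by checking that every operation in \eqref{perdodo}, including the pairing sum over $G_{(m)}$ and the insertion of $\nu_G$, commutes with the identifications and preserves the filtration and bilinear pairing structure of $G$; once this is verified factor by factor and then globally, the inclusion $*_q: \times_{i=1}^q C^{l_i}_{k_i} \to C_{k_1+\ldots+k_q-t}^{l_1+\ldots+l_s-r}$ follows.
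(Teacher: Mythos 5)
Your proposal is correct and follows essentially the same route as the paper, which presents this proposition as an immediate consequence of the preceding convergence subsection and the lemma stating that the multiplication \eqref{perdodo} preserves the $T_G$-derivative, $K_G$-conjugation, \eqref{shushu}, and Definition \ref{defcomp} (the paper in fact offers no further argument beyond ``Therefore \dots we obtain''). Your additional bookkeeping on coordinate invariance via Lemma \ref{kuzya} and on the concatenation of holonomy chains only makes explicit what the paper leaves implicit.
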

\subsection{Coboundary operator acting on the multiplication space}
 Since the result of multiplication \eqref{Z2n_pt_epsss} of elements of 
 the spaces  $C_{k_i}^{l_i}$, $1 \le i \le q$,   
 belongs to $C^{l_1+\ldots+l_q-r}_{k_1+\ldots+k_s-t}$, thus  
 the multiplication 
admits the action  
of $D^{l_1+\ldots+l_q-r}_{k_1+\ldots+k_s-t}$ defined in  
\eqref{hatdelta}. 
 The coboundary operator \eqref{hatdelta} 
 possesses a version of Leibniz law with respect to the multiplication 
\eqref{Z2n_pt_epsss}. Indeed, by elementary computation we get 
\begin{lemma}
For $\overline{F}({\bf x}_{l_i}) \in  C_{k_i}^{l_i}$, $1 \le i \le q$, 
the action of $D_{k_1 + \ldots+k_s-t}^{l_1 + \ldots+l_q-r}$ on the multiplication is given by  
  $D_{k_1 + \ldots+k_s-t}^{l_1 + \ldots+ l_q-r} \left(*_q  \overline{F} ({\bf  x}_{l_i})     
  \right) 
= 
  *_q (-1)^{l_i-r_i}    
  \left( D^{l_i-r_i}_{k_i-s_i}   
\overline{F}({\bf x}_{l_i-r_i})  \right)$. 
 \hfill \qed
\end{lemma}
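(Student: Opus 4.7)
The plan is to expand both sides of the identity using the explicit formula for $D^l_k$ given in \eqref{hatdelta} together with the definition of the multiplication $*_q$ from \eqref{perdodo}, and then match the resulting terms by a classical boundary-cancellation argument, of the type familiar from Hochschild or simplicial Leibniz rules. First I would linearly order the arguments of $*_q\overline{F}({\bf x}_{l_i})$ so that the first $l_1-r_1$ belong to the first factor, the next $l_2-r_2$ to the second factor, and so on, so that $D_{k_1+\ldots+k_s-t}^{l_1+\ldots+l_q-r}$ acts via insertions of $T_i(\overline{\nu}_G({\bf x}_i))$ at each of the $l_1+\ldots+l_q-r+1$ positions in this totally ordered list.

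Next I would partition the resulting sum into two classes of terms. The \emph{internal} insertions, which fall strictly between two arguments belonging to the same factor $\overline{F}({\bf x}_{l_i-r_i})$, reassemble by Proposition \ref{tolsto} and the $T_G$-derivative property \eqref{cond1} into exactly $(-1)^{s_i}D^{l_i-r_i}_{k_i-s_i}\overline{F}({\bf x}_{l_i-r_i})$ once one extracts the cumulative sign $(-1)^{l_1-r_1+\ldots+l_{i-1}-r_{i-1}}$ coming from the preceding factors. Combined with the overall $(\pm 1)^{i+1}$ from \eqref{hatdelta}, this is the sign $(-1)^{l_i-r_i}$ that appears in the statement once one regroups via $*_q$. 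The \emph{boundary} insertions, namely those placed at the interface between factor $i$ and factor $i+1$, appear twice in the expansion of the left-hand side, once as the final term for the $i$-th block (with sign $(-1)^{l_i-r_i+1}$ from \eqref{hatdelta}) and once as the initial term of the $(i+1)$-th block (with the opposite sign after the cumulative shift), so they cancel pairwise, exactly as in the standard cup-product Leibniz computation.

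The main obstacle will be the careful bookkeeping of signs and of the pairings $(\overline{u}_m,g_m)$ across the multiplication \eqref{perdodo}: one must verify that the insertion operators $T_j(\overline{\nu}_G({\bf x}_j))$ commute with the summation $\sum_{g_m\in G_{(m)}}\lambda^m$ defining $*_q$, which is where the convergence estimate \eqref{Cauchya} established in the previous subsection is needed so that the rearrangement of the doubly-indexed series is legitimate. A secondary subtlety is that the $r$ coincidences among the ${\bf z}_{l_i}$ and the $t$ coincidences among the ${\bf z}'_{k_i}$ reduce the number of insertion slots on each side in a compatible way; this follows from the definition of the configuration space $F_{ln}\C$ in Subsection \ref{functional}, but it must be checked that the identification of omitted parameters is done consistently for the $D$ on the left and the $D^{l_i-r_i}_{k_i-s_i}$ on the right.

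Finally, having matched internal terms and cancelled boundary terms, I would invoke the $T_G$- and $K_G$-properties verified in the preceding lemma of Subsection \ref{properties} to confirm that both sides lie in $C^{l_1+\ldots+l_q-r+1}_{k_1+\ldots+k_s-t-1}$, so that the identity is an equality of elements of the same double complex space. The signs $(-1)^{l_i-r_i}$ then appear precisely as predicted, yielding the claimed Leibniz law. $\qed$
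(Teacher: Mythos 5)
The paper itself offers no proof of this lemma: it is introduced with the phrase ``by elementary computation we get'' and the statement carries its own \verb|\qed|. Your proposal therefore supplies strictly more than the paper does, and the overall strategy --- expand $D_{k_1+\ldots+k_s-t}^{l_1+\ldots+l_q-r}$ on the product, split the insertion terms into those internal to a single factor and those at an interface, reassemble the former into $D^{l_i-r_i}_{k_i-s_i}\overline{F}({\bf x}_{l_i-r_i})$ and cancel the latter pairwise, while using \eqref{Cauchya} to justify interchanging the insertions with the sums over $m$ and $g_m\in G_{(m)}$ in \eqref{perdodo} --- is the standard and correct skeleton for a Leibniz law of this kind.

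That said, two concrete points in your sketch do not go through as written. First, the middle terms of \eqref{hatdelta} are not single insertions but \emph{double} insertions $T_i(\overline{\nu}_G({\bf x}_i))\,T_{i+1}(\overline{\nu}_G({\bf x}_{i+1}))$ acting on $\overline{F}({\bf x}_{l+1,\widehat{i},\widehat{i+1}})$, modeled on Huang's coboundary operator. A double insertion whose two slots straddle the interface between the $i$-th and $(i+1)$-th factors of $*_q$ belongs to neither of your two classes: it is not internal to either factor, and it does not arise as ``the last term of block $i$'' or ``the first term of block $i+1$'' of any single-factor differential, so the pairwise cancellation you invoke from the cup-product computation needs a separate argument (or one must show such straddling terms vanish using the structure of \eqref{perdodo}, e.g.\ the placement of the paired elements $(\overline{g}_m,\zeta_{1,i})$ at the ends of each block ${\bf y}_{l_i}$). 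Second, your internal terms naturally acquire the cumulative Koszul sign $(-1)^{(l_1-r_1)+\cdots+(l_{i-1}-r_{i-1})}$, whereas the statement asserts the sign $(-1)^{l_i-r_i}$ depending only on the $i$-th factor; you assert these agree ``once one regroups via $*_q$'' but give no mechanism for converting one into the other. Since the paper provides no computation against which to check this, you should either carry out the sign bookkeeping explicitly for $q=2$ (the only case used later, in Theorem \ref{dubina}) or flag that the sign in the statement may need correction.
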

\begin{remark}
Checking 
\eqref{hatdelta} we see that an extra arbitrary element $g_{l_i+1} \in G$,   
as well as corresponding 
 extra arbitrary formal parameter $z_{l_i+1}$ appear as a result of the action of $D^{l_i-r_i}_{k_i-s_i}$, 
$1 \le i \le q$, 
 on 
$\overline{F}({\bf x}_{l_i-r_i}) \in C^{l_i-r_i}_{k_i-s_i}$ mapping it to $C^{l_i-r_i+1}_{k_i-s_i-1}$.   
\end{remark}
\subsection{Relations to ${\bf \check C}$ech-de~Rham cohomology in Crainic--Moerdijk construction} 
\label{relcm}
Recall the construction of the ${\rm \check C}$ech-de~Rham
cohomology by Crainic and Moerdijk~\cite{CM} 
 for a foliation $\F$ of codimension $p$ on a smooth manifold $M$. 
Let $\U$ be a transversal basis 
 for $\F$.
Consider the double complex  
$C^{k,l}=\prod_{U_0\stackrel{h_1}{\longrightarrow}\cdots
\stackrel{h_p}{\longrightarrow} U_k} \omega^l(U_0)$, 
where the multiplication ranges over all $k$-tuples of holonomy embeddings between
transversal sections from a fixed transversal basis $\U$. The
vertical differential is defined as 
 $(-1)^k d: C^{k,l}\to
C^{k,l+1}$,
 where $d$ is the ordinary de~Rham differential. The
horizontal differential 
$\delta:C^{k,l} \to C^{k+1,l}$, 
 is given by
$\delta= \sum\limits_{i=1}^k (-1)^{i} \delta_{i}$, where    
$\delta_{i} \omega( {\bf h}_{k+1} ) = 
                                      \delta_{i, 0} \; h_{1}^{*} \; \omega({\bf h}_{(2, k+1)})
 +(1-\delta_{i, 0} - \delta_{i, k+1})\; 
\omega({\bf h}_{(1, i-1)}, h_{i+1}h_{i}, {\bf h}_{(i+2, k+1)})
+\delta_{i, k+1}  \omega( {\bf h}_k)$,  
expressed in terms of differential forms. 
This double complex constitutes a bigraded differential algebra endowed with a natural multiplication 
$(\omega\; \eta)({\bf h}_{k+k\,'})= (-1)^{kk\,'}\; \omega({\bf h}_k)
 \;( {\bf h})_k^*\;.\eta\left( {\bf h}_{ (k+1, k+k') } \right)$,   
for $\omega\in C^{k, l}$ and $\eta\in C^{k',l'}$, 
thus $(\omega\cdot\eta)({\bf h}_{k+k'}) \in C^{k+ k',l+ l'}$. 
The cohomology of this complex is called the ${\rm \check C}$ech-de~Rham
cohomology of the leaf space $M/\F$ with respect to the transversal basis $\U$. 
%
The {${\rm \check C}$}ern-de Rham cohomology of a foliated smooth manifold 
introduced in \cite{CM} results from restricted meromorphic function cohomology 
introduced in this paper. 
Indeed, it can be seen by making the following associations:   
$h_i \sim  g_i, \;  i=1, \ldots, {ln}$,  
$\omega({\bf h}_{l.n})  \sim    F({\bf x}_l)$,   
 $h^*(h_1) \ldots h^*(h_{l.n})({\bf z}_{ln}) \sim  {\bf \omega}_G ({\bf z}_{l, n} )$.    
\section{Examples: invariants of foliations}  
\label{invariant}
In this Section, using the double complex construction of Section \ref{pisa},    
 we find cohomology invariants of foliations, in particular, 
a generalization of the Godbillon--Vey invariant \cite{Ghys}
 for codimension one foliations.  
 We call a map 
$\overline{F}^l_k \in  C_k^l$ closed if $D^l_k \overline{F}^l_k =0$.  
For $k \ge 1$. It is exact if there exists 
$\overline{F}_{k-1}^{l+1}  \in  C_{k-1}^{l+1} $   
such that $\overline{F}_{k-1}^{l+1}=D^l_k \overline{F}^l_k$.    
Taking into account the correspondence   
with the ${\rm \check C}$ech-de Rham complex due to \cite{CM}, we reformulate the 
derivation of a generalization of Godbillon--Vey invariant  
in restricted meromorphic functions terms. 
For $\overline{F}^l_k \in C^l_k $ we call the cohomology class of mappings 
 $\left[ \overline{F}^l_k \right]$ 
the set of all closed forms that differs from $\overline{F}^l_k$ by an  
exact mapping, i.e., for $\overline{F}^{l-1}_{k+1} \in  C^{l-1}_{k+1} $, 
$\left[ \overline{F}^l_k\right]= \overline{F}^l_k + D^{l-1}_{k+1} \overline{F}^{l-1}_{k+1}$,  
 assuming that both parts of the last formula belongs to the same space $C^l_k$.    
\subsection{Example: the general case of $q=2$}
For $\Phi^k_m \in C^k_m$ and $\Phi^{k'}_{m'} \in C^{k'}_{m'}$, 
 let us introduce the commutator 
$\Phi^k_m*\Phi^{k'}_{m'}=[\Phi^k_m,_{*_2} \Phi^{k'}_{m'}]_-$ 
with respect to the multiplication $*_2$.  
Next, we obtain the main result of this paper 
\begin{theorem}
\label{dubina}
The orthogonality condition  
for elements of double complex spaces 
provides 
for elements of the double complex  spaces of \eqref{ourbi-complex} 
the non-vanishing cohomology invariants of the form 
$\left[\left(D^{n_0}_{m_0} \Phi^{n_0}_{m_0} \right)* \Phi^{n_0}_{m_0} \right]$,   
$\left[\left(D^n_m \Phi^n_m \right)* \Phi^n_m \right]$,  
$\left[\left(D^{n_i}_{m_i} \Phi^{n_i}_{m_i} \right)* \Phi^{n_i}_{m_i} \right]$,      
 for $i=1, \ldots, l$, for some $l \in \N$, with  
non-vanishing $\left(D^{n_0}_{m_0} \Phi^{n_0}_{m_0} \right)* \Phi^{n_0}_{m_0}$,   
$\left(D^n_m \Phi^n_m \right)* \Phi^n_m$, 
 and 
$\left(D^{n_i}_{m_i} \Phi^{n_i}_{m_i} \right)* \Phi^{n_i}_{m_i}$.    
 These classes are independent on the choices of $\Phi^{n_0}_{m_0} \in C^{n_0}_{m_0}$, 
 $\Phi^n_m \in C^n_m$, and 
  $\Phi^{n_i}_{m_i} \in C^{n_i}_{m_i}$.   
\end{theorem}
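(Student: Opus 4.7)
The plan is to verify three things in sequence: that the representatives $(D\Phi)*\Phi$ lie in the kernel of the appropriate coboundary operator, that their cohomology classes are insensitive to admissible modifications of the auxiliary $\Phi$, and that the orthogonality hypothesis prevents the class from being a coboundary.

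First I would establish closedness. Recall that the bracket is $\Phi *\Phi' = [\Phi,_{*_2}\Phi']_-$, so $*$ is antisymmetric on same-degree entries. Using the graded Leibniz formula from the lemma of Subsection~\ref{properties} on a product of two factors, one computes
\[
D\bigl((D\Phi)*\Phi\bigr) \;=\; (D^2\Phi)*\Phi \;\pm\; (D\Phi)*(D\Phi).
\]
The first term vanishes by Proposition~\ref{cochainprop} (the chain condition $D \circ D = 0$); the second vanishes because $[D\Phi,_{*_2}D\Phi]_- = 0$ by antisymmetry. Hence $(D\Phi)*\Phi$ is closed in the space dictated by Proposition~\ref{tolsto}, and exactly the same calculation produces closed representatives from $\Phi^{n_0}_{m_0}$ and each $\Phi^{n_i}_{m_i}$.

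Second I would establish independence of the cohomology class on the chosen $\Phi$. Given two admissible choices $\Phi$ and $\widetilde\Phi$ in the same space $C^n_m$, write $\Psi = \widetilde\Phi - \Phi$ and expand
\[
(D\widetilde\Phi)*\widetilde\Phi - (D\Phi)*\Phi
= (D\Phi)*\Psi + (D\Psi)*\Phi + (D\Psi)*\Psi.
\]
Applying the Leibniz rule backwards, one recognizes $(D\Phi)*\Psi + (-1)^{n}\Phi*(D\Psi) = D(\Phi*\Psi) - (D\Phi)*\Psi$ modulo signs, and a parallel identity handles the quadratic term $(D\Psi)*\Psi = \tfrac12 D(\Psi*\Psi)$ using antisymmetry of the bracket together with $D^2=0$. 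Collecting the resulting total $D$-images exhibits the difference as a coboundary, proving $[(D\widetilde\Phi)*\widetilde\Phi] = [(D\Phi)*\Phi]$.

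The main obstacle, in my view, is the index bookkeeping: every application of $D^{l_i}_{k_i}$ and of $*_2$ shifts the bidegrees according to the rules $(l_i,k_i)\mapsto(l_i+1,k_i-1)$ and $(l_1+l_2-r,k_1+k_2-s)$ respectively, and one must confirm at each step that both terms of any displayed equation land in precisely the same space so that subtraction is meaningful. Once the bidegree accounting is in order, non-vanishing is settled by invoking the orthogonality condition: it guarantees that the bilinear pairing $(\cdot\,,\cdot)$ used in \eqref{perdodo} restricts nontrivially to the cochains at hand, so at least one Laurent coefficient in $\lambda$ of \eqref{pihva} is nonzero. This witnesses a concrete representative that cannot be written as $D^{l-1}_{k+1}(\cdot)$, and the inductive extension from $(n_0,m_0)$ through each $(n_i,m_i)$ is routine once the $q=2$ case is in place.
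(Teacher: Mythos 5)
Your proposal diverges from the paper's argument in a way that leaves the central claim unproved. The paper's proof is built entirely around the orthogonality condition: from $\Phi^n_m * D^{n_0}_{m_0}\Phi^{n_0}_{m_0}=0$ it extracts a factorization $D^{n_0}_{m_0}\Phi^{n_0}_{m_0}=\Phi^n_m*\Phi^{n_1}_{m_1}$ (a Frobenius-type step), derives the bidegree constraints $n_0+1=n+n_1-r_0$ and $m_0-1=m+m_1-t_0$, and then iterates the coboundary operator to generate the whole tower of elements $\Phi^{n_i}_{m_i}$ and relations \eqref{ogromno}; the classes in the statement only exist because of this chain. In your write-up the orthogonality condition does no work until the last paragraph, where it is invoked to assert that ``at least one Laurent coefficient in $\lambda$ is nonzero.'' That assertion is a gap twice over: first, orthogonality is itself a \emph{vanishing} condition on a product, and you give no mechanism by which it forces a different product to be nonzero; second, even granting that the representative $(D\Phi)*\Phi$ is a nonzero cochain, that does not show the cohomology class is nonzero --- a nonzero cochain can perfectly well be a coboundary. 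The paper's actual non-vanishing argument is entirely different and purely combinatorial: assuming $\left(D^n_m\Phi^u_v\right)*\Phi^u_v=0$ yields a relation $D^n_m\Phi^u_v=\gamma*\Phi^u_v$ whose two sides cannot lie in the same space $C^{l}_{k}$, because the bidegree bookkeeping forces $m'=t-1$; contradiction. You flagged the bidegree bookkeeping as ``the main obstacle'' but then did not carry it out --- in the paper it \emph{is} the proof of non-vanishing.

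Your closedness and independence computations are closer in spirit to what the paper does (the third line of \eqref{ogromno} is exactly the Leibniz identity you use, and the paper also handles independence by the substitution $\Phi^u_v\mapsto\Phi^u_v+\Phi$), and they are a reasonable formalization of steps the paper leaves implicit. Two cautions, though: the identity $(D\Psi)*\Psi=\tfrac12 D(\Psi*\Psi)$ is delicate for the commutator bracket $[\,\cdot\,,_{*_2}\cdot\,]_-$, since $\Psi*\Psi=0$ identically while the graded Leibniz expansion of $D(\Psi*\Psi)$ need not return $2(D\Psi)*\Psi$ unless the signs in the Leibniz lemma are checked against the degree shift $(l,k)\mapsto(l+1,k-1)$; and your rewriting ``$(D\Phi)*\Psi+(-1)^{n}\Phi*(D\Psi)=D(\Phi*\Psi)-(D\Phi)*\Psi$ modulo signs'' is not an identity --- the left-hand side already equals $D(\Phi*\Psi)$. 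Neither of these is fatal, but the non-vanishing step is, and it is precisely the step the theorem turns on.
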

\begin{proof}
For $q=2$ let us consider the most general case.  
 For non-negative $n_0$, $n$, $n_1$, $m_0$, $m$, $m_1$,  
let  $\Phi^{n_0}_{m_0} \in C^{n_0}_{m_0}$,  $\Phi^n_m \in C^n_m$,   
and $\Phi^{n_1}_{m_1} \in C^{n_1}_{m_1}$.  
For $\Phi^n_m$ and $\Phi^{n_1}_{m_1}$, let $r_0$ be the number  
of common vertex algebra elements (and formal parameters),  
and $t_0$ be the number of common vertex operators $\Phi^n_m$ and $\Phi^{n_1}_{m_1}$ are composable to. 
Note that we assume $n$, $n_1 \ge r_0$,  $m$, $m_1 \ge t_0$.  
 Taking into account the orthogonality condition 
\[
\Phi^n_m * D^{n_0}_{m_0} \Phi^{n_0}_{m_0}=0, 
\]
implies that there exist $C^{n_1}_{m_1} \in C^{n_1}_{m_1}$, 
such that  
 $D^{n_0}_{m_0} \Phi^{n_0}_{m_0}= \Phi^n_m * \Phi^{n_1}_{m_1}$.   
From the last equations we obtain 
$n_0+1=n+n_1-r_0$, 
$m_0-1=m+m_1-t_0$. 
Note that we have extra conditions following from the last identities: 
$n_0+1 \ge 0$,  $m_0-1 \ge 0$. 
The conditions above for indexes express the double grading condition 
for the double complex  \eqref{ourbi-complex}.    
As a result, we have a system in integer variables satisfying the grading conditions above.  
Consequently acting by corresponding coboundary operators 
we obtain the following relations:  
\begin{eqnarray}
\label{ogromno}
&& \Phi^n_m * D^{n_0}_{m_0} \Phi^{n_0}_{m_0}=0, 
\nn
&&
D^{n_0}_{m_0} \Phi^{n_0}_{m_0}= \Phi^n_m * \Phi^{n_1}_{m_1}, 
\nn
&& D^n_m \Phi^n_m * \Phi^{n_1}_{m_1} + (-1)^n \Phi^n_m * D^{n_1}_{m_1} \Phi^{n_1}_{m_1}=0,  
\nn
&& 
 D^n_m \Phi^n_m * D^{n_0}_{m_0} \Phi^{n_0}_{m_0}=0,  
\nn
&&
D^{n_0}_{m_0} \Phi^{n_0}_{m_0} = D^n_m \Phi^n_m * \Phi^{n_2}_{m_2},  
\nn
&& D^n_m \Phi^n_m * D^{n_i}_{m_i} \Phi^{n_i}_{m_i}=0, \; \;
\nn
&& D^{n_i}_{m_i} \Phi^{n_i}_{m_i} = D^n_m \Phi^n_m * \Phi^{n_{i+1}}_{m_{i+1}}, 
\end{eqnarray}
where $\Phi^{n_i}_{m_i} \in C^{n_i}_{m_i}$,   
and 
$n_i$, $m_i$, $i\ge 2$ satisfy relations  
$n_i=n+n_{i+1}-r_{i+1}$,
$m_i = m +m_{i+1}-t_{i+1}$. 
The sequence of relations \eqref{ogromno} 
does not cancel until the conditions on indexes given above fulfill.  
 Thus, we see that the orthogonality condition 
for the double complex 
 together with the action of coboundary operator $D^n_m$,
 and the multiplication \eqref{Z2n_pt_epsss},   
provides invariants of the theorem. 

Let $\Phi^u_v$,  $\phi$ be one of generators
 $\Phi^{n_0}_{m_0}$, $\Phi^n_m$, $\Phi^{n_i}_{m_i}$, $1 \le i \le l$, 
$(u,v)=(n_0, m_0)$, $(n,m)$, $(n_i, m_i)$. 
Let us show now the non-vanishing property of $\left(\left(D^n_m \Phi^u_v \right)* \Phi^u_v\right)$.   
Indeed, suppose
$\left(D^n_m \Phi^u_v \right)* \Phi^u_v=0$. 
 Then there exists $\gamma \in C^{n'}_{m'}$,   
such that 
$D^n_m \Phi^u_v =\gamma * \Phi^u_v$. 
 Both sides of the last equality should belong to the same double complex  
space but one can see that it is not possible since we obtain $m'=t-1$,
 i.e., the number of common vertex operators 
for the last equation is greater than for one of multipliers.  
Thus, $\left(D^n_m \Phi^u_v \right)* \Phi^u_v$ is non-vanishing.  
By the substitution $\Phi^u_v \mapsto \left(\Phi^u_v + \Phi\right)$, 
$\Phi^n_m \in C^n_m$ for 
 it is easy to see that 
  $\left[\left(D^n_m \Phi^u_v \right)* \Phi^u_v \right]$    
 is invariant, i.e., it does not depend on the choice of $\Phi^n_m$.   
\end{proof}
\subsection{Example: codimention one foliation of a three-dimensional manifold} 
For a three-dimensional smooth complex manifold, 
 consider a codimension one foliation $\F$.  
Following the construction of Definition \eqref{ourbi-complex}, 
 we take $k$-tuples of one-dimensional transversal sections. 
For $g_j \in G$, $w_j \in U_j$, 
for each section we attach the form 
$\nu_G(x_j)$, $x_j=(g_j, w_j)$.  
We then work with mappings $\varphi \in  C_k^{3l}$. 
As in the setup of differential forms,  a mapping $\varphi \in  C_k^{3l}$ is associated to 
a codimension one foliation.  
As we see from the definition of the 
action of the derivative,   
it satisfies properties 
similar to differential forms.
 The integrability condition for mapping $\overline{F}^l_k \in C^l_k$ 
and
$D^l_k\; \overline{F}_0 \in C^{l+1}_{k_1}$   
 has the form 
$\overline{F}_0 * D^l_k\; \overline{F}_0=0$.   
It results with the Frobenius theorem, i.e., that there exist 
$\overline{F}_2 \in {C}^{l'}_{k'}$, such that  
 $D^l_k{\overline{F}_0}= \overline{F}_0 * \overline{F}_2$,   
which uniquely determines a foliation 
 with parameters of $\nu_G$-forms satisfying Definition \ref{defcomp}  
conditions. 
In this case we obtain a generalization of Godbillon--Vey invariant 
in terms of the cohomology classes $\left[\left(D^l_k  \Delta 
 \right)\right.$ $*$ $\left. \Delta \right]$, 
for $\Delta= \overline{F}$, $\overline{F}_1$, $\overline{F}'$, 
and combinations $(l,k)=(1,2)$, $(0, 3)$, $(1, t)$ correspondingly. 
The case $t=1$ corresponds to the classical Godbillon--Vey invariant. 
Examples of higher $q$ will be considered elsewhere. 
\section*{Acknowledgements}
The author would like to thank Ya. V. Bazaikin and A. Galaev,  
for related discussions. 
The author's research of the author is supported 
by the Academy of Sciences of the Czech Republic (RVO 67985840). 

\end{document}